\newcommand{\spec}{{\rm Spec}}
\DeclareMathOperator{\tspec}{{\it t}-Spec}
\DeclareMathOperator{\tmax}{{\it t}-Max}
\newcommand{\Int}{{\rm Int}}
\newcommand{\ms}{\mathscr}
\newcommand{\ad}{{\rm Cl}}
\newcommand{\f}{\mathfrak}
\newcommand{\Max}{{\rm Max}}
\newcommand{\mb}{\mathbb}
\newcommand{\z}{{\ldots}}
\newcommand{\U}{\ms U}
\newtheoremstyle{break1}
  {9pt}
  {9pt}
  {}
  {}
  {\textbf}
  {.}
  {.7em}
  {}
\theoremstyle{break}
\newtheorem{thm}{ \textbf{Theorem}}[section]
\newtheorem{cor}[thm]{ \textbf{Corollary}}
\newtheorem{lem}[thm]{ \textbf{Lemma}}
\newtheorem{prop}[thm]{ \textbf{Proposition}}
\theoremstyle{definition}
\newtheorem{ex}[thm]{ \textbf{Example}}
\newtheorem{oss}[thm]{Remark}
\theoremstyle{remark}
\def\SQ{\mathbb Q}    
\def\SZ{\mathbb Z}    
\begin{document}

\title[]
{On a topological characterization of Pr\"ufer $v$-multiplication domains among essential domains}
\author{Carmelo Antonio Finocchiaro}
\address[Carmelo Antonio Finocchiaro]{Dipartimento di Matematica e Fisica\\ Universit\`{a}
degli studi Roma Tre\\ Largo San Leonardo Murialdo 1, 00146 Roma,
Italy}
\email{carmelo@mat.uniroma3.it}
\author{Francesca Tartarone}
\address[Francesca Tartarone]{Dipartimento di Matematica e Fisica\\ Universit\`{a}
degli studi Roma Tre\\ Largo San Leonardo Murialdo 1, 00146 Roma,
Italy} \email{tfrance@mat.uniroma3.it}
 
\thanks{2010 {\it Mathematics Subject Classification}.
Primary: 13A15, 13A18, 13F05, 54A20.}

\begin{abstract}
 In this paper we characterize the Pr\"ufer $v$-multiplication domain   as a class of essential domains verifying an additional property on the closure of some families of prime ideals, with respect to the constructible topology.

\end{abstract} \maketitle
\section*{Introduction}

 The notion of   Pr\"ufer domain, introduced by H. Pr\"ufer in 1932, plays a central role in the theory of integrally closed   domains. In fact  it globalizes  the concept of   valuation domain in the sense that a domain is Pr\"ufer if and only if it is locally a valuation domain  (i.e. all its localizations at  prime  ideals are valuation domains). There is a wide literature about the investigation of the multiplicative structure of  ideals in Pr\"ufer domains  (for a deeper insight on recent developements on this topic, see   \cite{fo-ho-lu}, \cite{olb}).   The notion of  \emph{Pr\"ufer $v$-multiplication domain} (briefly, P$v$MD) was introduced to enlarge the class of    Pr\"ufer domains (for instance,    two-dimensional regular   domains  are P$v$MD but not Pr\"ufer). More precisely, an integral domain is a P$v$MD if and only if it is $t$-locally a valuation domain,  i.e.  each localization at $t$-prime   ideals is a valuation domain  (Section 1). Here we just point out that the condition of being $t$-locally a valuation domain is certainly weaker than being locally a valuation domain because it involves a subset of the prime spectrum of a domain.   
Other interesting examples of P$v$MD's, besides Pr\"ufer domains, are for instance $\SZ[X]$ and, more generally, Krull domains. 


M. Griffin in \cite{Gr} gives a very simple characterization of the P$v$MDs with the $t$-finite character (i.e., each nonzero element of $D$ is contained in finitely many $t$-maximal ideals). In this case they are exactly the essential domains with the $t$-finite character (Theorem \ref{griffin}).

But the essential property for a domain $D$ is not, in general, equivalent to saying that $D$ is a P$v$MD. An important example of this fact is given by W. Heinzer - J. Ohm in \cite{ho}. We have gone through this construction in order to understand what is missing in this essential domain that makes it not to be a P$v$MD. Then we used this observation to give a general characterization of P$v$MDs among the essential domains.

The central result of this paper is Theorem \ref{pvmdchar} in which we  describe exactly P$v$MDs as a subclass of essential domains that verifies an additional condition  regarding ultrafilter limits    of   suitable families of prime ideals. 

This theorem is on the one hand a generalization to any essential domain of the above-mentioned result by Griffin   on domains with $t$-finite character and, on the other hand, it gives a topological explanation of what goes wrong with Heinzer-Ohm example  of an essential domain that is not a P$v$MD. 

In  Corollary \ref{sopranelli} we   compare the P$v$MD property among domains with different quotient fields. In particular we give a result in the case in which these quotient fields $K$ and $L$ form an algebraic extension $K \subseteq L$.

An interesting  still open question is when a family of  P$v$MDs $\{D_i:i\in I\}$ is such that the intersection $D = \bigcap_{i \in I}D_i$ is  a P$v$MD. This  does not happen even in very easy cases like the intersection of two P$v$MD's (for instance domains of the type $V \cap \SQ[X]$, where $V$ is a valuation overring of $\SZ[X]$, are quite often non-P$v$MD).
In Theorem \ref{intersezioni} and Corollary \ref{finite intersection}   we partially answer this question. In particular, we show that if the family is finite and   $D $ is  \lq\lq essential with respect to each $D_i$"  then $D$ is   a P$v$MD.

\medskip
An interesting application of Theorem \ref{pvmdchar} is given in Section \ref{applications} with regard to the ring of integer valued polynomials  over a domain $D$,   $\Int(D) = \{f \in K[X]: f(D) \subseteq D\}$. 

Both the problems of characterizing when $\Int(D)$   is Pr\"ufer or P$v$MD have been investigated in the last twenty years (see, for instance, \cite{chabert, loper, Calota}).

Here we discuss   the P$v$MD property of $\Int(D)$. In Theorem \ref{pvmdint}  we refine the general characterization  of a domain   $D$ such that   $\Int(D)$  is a P$v$MD given in \cite{Calota}. More precisely, we show that  one of the three equivalent conditions of \cite[Theorem 3.4]{Calota} posed on $D$ can be deleted by putting an extra-hypothesis on the localizations $\Int(D_P)$ (for $P \in \tspec(D)$).

Most of the results presented in this paper are topological in nature and their proofs are often based on techniques involving the constructible topology. For relevant contributions on this circle of ideas see, for instance, \cite{fifolo}, \cite{olb3}.

\section{Preliminaries}\label{basic}
With the term \emph{ring} we will mean always a commutative ring with identity and, as usual, we denote by $\spec(A)$ the set of all prime ideals of a ring $A$. For any ring homomorphism $f:A\longrightarrow B$, we shall denote by $f^\star:\spec(B)\longrightarrow \spec(A)$ the canonical  map, { induced by $f$}.

\vspace{0.5cm}
\textsc{\underline{Ultrafilter limit points}} 
\vspace{0.5cm}

Given a set $X$, we recall that an \emph{ultrafilter on $X$} is a  collection $\ms U$ of subsets of $X$ such that: 
\begin{enumerate}[\hspace{0.6cm}\rm (1)]
\item $\emptyset\notin \ms U$.
\item If $Y,Z\in \ms U$, then $Y\cap Z\in \ms U$.
\item If $Y\in \ms U$ and $Y\subseteq Z\subseteq X$, then $Z\in \ms U$.
\item For any $Y\subseteq X$, either $Y\in \ms U$ or $X-Y\in \ms U$.
\end{enumerate}
We remind in the following remark some basic properties of ultrafilters that will be useful. 
\begin{oss}\label{beginning-ultra} Let $X$ be a set.
\begin{enumerate}[\hspace{0.6cm}\rm (1)]
\item If $\ms F$ is a collection of sets with the finite intersection property, then $\ms F$ extends to some ultrafilter $\ms U$ on $X$ (i.e., $\ms U\supseteq \ms F$).
\item  If $x\in X$, the collection of sets $\ms U_x:=\{Y\subseteq X:x\in Y\}$ is an ultrafilter on $x$, called \emph{a principal ultrafilter.} From the definitions, it easily follows   that an ultrafilter is trivial if and only if it contains a finite set. 
\item An ultrafilter $\ms U$ on $X$ is \emph{nontrivial} if $\ms U\neq \ms U_x$, for any $x\in X$. A straightforward application of Zorn's Lemma shows that  $X$ admits non principal ultrafilters if and only if it is infinite.
\end{enumerate}

\end{oss}

Now, let $A$ be a ring. Unless otherwise specified, we endow $\spec(A)$ with the Zariski topology, whose closed sets are of the form $$V(\f a):=\{\f p\in \spec(A): \f a\subseteq \f p\},$$ for any ideal $\f a$ of $A$.  For any $Y\subseteq \spec(A)$, we shall
denote by $\ad^{\rm c}(Y)$ the closure of $Y$, with respect to the
constructible topology, that is the smallest  topology for which any set of the form 
$$
D(f):=\{\f p\in \spec(A):f\notin \f p\} \qquad (f\in A)
$$
is clopen. It follows easily by definitions that a basis of clopen sets for the constructible topology is 
$$
\{D(f)\cap V(\f a):f\in A, \f a \mbox{ finitely generated ideal of }A\}
$$ Recently, a relation between the constructible topology and the notion of \emph{ultrafilter limit point} has been shown independently in \cite{fl} and \cite{fi}. More precisely, let $Y$ be a nonempty subset of $\spec(A)$ and let $\ms U$ be an ultrafilter on $Y$. By \cite[Lemma 2.4]{Calota}, the set
$$
Y_{\ms U}:=\{a\in A:V(a)\cap Y\in \ms U\}
$$
is a prime ideal of $A$, called \emph{the ultrafilter limit point of $Y$, with respect to $\ms U$}.  By \cite[Theorem 8]{fl} and \cite[Corollary 2.16]{fi}, a set is closed  with respect to the constructible topology if and only if it contains all of its ultrafilter limit points. Moreover, by \cite[Proposition 2.12]{fi}, we have 
 \label{diamond} $$(\diamond) \qquad
\ad^{\rm c}(Y)=\{Y_{\ms U}:\ms U \mbox{ ultrafilter on } Y\}
$$
for every subset $Y$ of $\spec(A)$.

\vspace{0.5cm}
\textsc{\underline{The $t$-operation}} 
\vspace{0.5cm}

  Given an integral domain $D$ with quotient field $K$ we have the following usual terminology and definitions: for each nonzero (fractional) ideal $I$ of $D$ the \textit{divisorial closure} of $I$ is the ideal $I^v = (D:(D:I))$, where $(D \colon I) := \{x \in K : xI \subseteq D\}$.

The \textit{$t$-closure} of $I$ is
$$I^t=\bigcup\{J^v:J \textrm{ is finitely generated ideal and }  J \subseteq I\}$$

The ideal $I$ is called a \textit{$t$-ideal} if   either $I=(0)$ or  $I = I^t$ and it is a \textit{$t$-prime} if it is prime and a $t$-ideal (usually the notion of a $t$-ideal is given for  \emph{nonzero} fractional ideals, but here it will be convenient to declare $(0)$ a $t$-ideal, by definition).
A $t$-maximal ideal is a $t$-ideal which is maximal among the proper $t$-ideals of $D$. A $t$-maximal ideal is   $t$-prime and
a proper $t$-ideal is always contained in a $t$-maximal ideal. We denote by $\tmax(D)$ the set of the $t$-maximal ideals of $D$ and by $\tspec(D)$ the set of $t$-prime ideals of $D$. For background material on $t$-operation  see, for instance,  \cite{gilmer, jaffard} 

\section{Main Results}
Let $D$ be an integral domain. 
A valuation overring of $D$ is said
to be \emph{essential}  for $D$ if it is a localization of $D$. A prime ideal of $D$ is \emph{essential} if it is the center of an essential valuation overring of $D$.  A collection
$\mathcal V$ of overrings of $D$ is said to be \emph{an essential
representation of $D$} if $D =\bigcap\{
V:V\in\mathcal V\}$ and each member of  $\mathcal V$ is essential for $D$. Recall that $D$ is said to be \emph{essential} if it has an essential representation.  Denote by $\mathcal E(D)$ \emph{the essential prime spectrum of $D$}, i.e., 
$$
\mathcal E(D):=\{\f p\in \spec(D): D_{\f p}\mbox{ is a valuation domain }\}
$$

\begin{oss}\label{basic-facts-toperation} We recall the following well-known facts:
\begin{enumerate} 

\item Any domain $D$ can be represented as $D = \bigcap_{M \in \tmax(D)}D_M$ \cite[Proposition 4]{Gr}. 
\item An integral domain is a Pr\"ufer domain if and only if every ideal is a $t$-ideal \cite[Proposition 34.12]{gilmer}.   In particular, every ideal of a valuation domain is a $t$-ideal.
\item A P$v$MD is always essential because $D_P$ is a valuation domain for each $M \in \tmax(D)$  (\cite[Theorem 3.2]{ka}).
\item For any integral domain $D$ the following inclusion $\mathcal E(D)\subseteq \tspec(D)$ holds, by \cite[Lemma 3.17]{ka}. 
\item There exist essential domains that are not P$v$MD. An example is given by W. Heinzer - J. Ohm (\cite{ho}, see also the following Example \ref{example-heinzer}).

\item A domain $D$ has the $t$-finite (resp. finite) character if each nonzero element $x \in D$ belongs to finitely many $t$-maximal (resp. maximal) ideals.

\noindent P$v$MD's may not have the $t$-finite character: for instance take $\SZ + X\SQ[X]$.

\item By  \cite[Lemma 2.4 \& Proposition 2.5]{Calota},      every  nonzero ultrafilter limit  of a family of $t$-prime ideals is a $t$-prime ideal. Since we have set $(0)$ to be a $t$-ideal, we have that every    ultrafilter limit  of a family of $t$-prime ideals is a $t$-prime ideal,  that is, $\tspec(D)$ is closed, with respect to the constructible topology \cite[Theorem 8]{fl}.  
 
{ Thus, if $D$ is a P$v$MD, then $\mathcal E(D)$ is closed  with respect to the constructible topology, since $\mathcal E(D)=\tspec(D)$}.

\end{enumerate}
\end{oss}

In Theorem \ref{pvmdchar} we characterize P$v$MDs in terms of the closure (with respect to the constructible topology) of a suitable subset of $\mathcal E(D)$.

We say that a
collection of overrings $\mathcal O$ of $D$ is \emph{locally finite}
if  for any nonzero element $x \in D$  the set $\{B\in\mathcal O:
x\mbox{ is not invertible in }B\}$ is finite. Recall that an
integral domain is \emph{a Krull-type domain} if it is an essential
domain  and it has a locally finite essential representation. The
following result characterizes Krull-type domain. 

\begin{thm}{\rm (\cite[Proposition 4, Theorems 5 and 7]{Gr})}\label{griffin} Let $D$ be an integral domain.  
  The following conditions are equivalent.
\begin{enumerate}[\hspace{0.6cm}\rm (i)]
\item $D$ is a Krull-type domain.
\item $D$ is a P$v$MD with $t$-finite character.
\end{enumerate}
 
\end{thm}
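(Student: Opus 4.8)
The plan is to prove the two implications separately, with (ii) $\Rightarrow$ (i) being a direct unwinding of the definitions and essentially all of the work concentrated in (i) $\Rightarrow$ (ii). For (ii) $\Rightarrow$ (i), assume $D$ is a P$v$MD with $t$-finite character. By Remark \ref{basic-facts-toperation}(1) we have $D = \bigcap_{M \in \tmax(D)} D_M$, and since $D$ is a P$v$MD each $D_M$ (with $M$ a $t$-maximal ideal) is a valuation overring that is a localization of $D$, hence essential. Thus $\{D_M : M \in \tmax(D)\}$ is an essential representation. To check it is locally finite, I would fix a nonzero $x \in D$ and observe that $x$ fails to be invertible in $D_M$ precisely when $x \in M$; by $t$-finite character this occurs for only finitely many $M \in \tmax(D)$. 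Hence the representation is locally finite and $D$ is a Krull-type domain.

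For the converse (i) $\Rightarrow$ (ii), let $\{V_\alpha\}_{\alpha \in I}$ be a locally finite essential representation, and write $V_\alpha = D_{P_\alpha}$ for its center $P_\alpha$, so that $D = \bigcap_\alpha D_{P_\alpha}$ and, by Remark \ref{basic-facts-toperation}(4), each $P_\alpha \in \mathcal E(D) \subseteq \tspec(D)$ is a $t$-prime. The crux is the claim that every $M \in \tmax(D)$ coincides with one of the $P_\alpha$. Granting this, both conclusions follow immediately: the $t$-maximal ideals containing a fixed nonzero $x$ lie among the finitely many $P_\alpha$ containing $x$, giving the $t$-finite character; and $D_M = D_{P_\alpha} = V_\alpha$ is then a valuation domain for every $t$-maximal $M$, giving the P$v$MD property.

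To establish the claim I would fix $M \in \tmax(D)$ and a nonzero $x \in M$, and let $P_{\alpha_1}, \dots, P_{\alpha_n}$ be the finitely many (by local finiteness) members of the family containing $x$. I argue by contradiction that $M \subseteq P_{\alpha_j}$ for some $j$: if $M \not\subseteq P_{\alpha_j}$ for every $j$, choose $y_j \in M \setminus P_{\alpha_j}$ and set $J := (x, y_1, \dots, y_n)$, a finitely generated ideal contained in $M$. The key computation is that $(D : J) = D$. Indeed, for an index $\beta \notin \{\alpha_1, \dots, \alpha_n\}$ the element $x$ is a unit in $D_{P_\beta}$, while for $\beta = \alpha_j$ the element $y_j$ is a unit in $D_{P_\beta}$; so in either case every $z \in (D : J)$ lies in $D_{P_\beta}$, and intersecting over all $\beta$ yields $z \in \bigcap_\beta D_{P_\beta} = D$. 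Hence $J^v = (D : (D : J)) = D$, contradicting $J \subseteq M = M^t$. Therefore $M \subseteq P_{\alpha_j}$ for some $j$, and since $P_{\alpha_j}$ is a proper $t$-ideal and $M$ is $t$-maximal, $t$-maximality forces $M = P_{\alpha_j}$.

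The hard part is precisely this last computation: exhibiting a single finitely generated ideal $J \subseteq M$ with $J^v = D$. Here the local finiteness of the representation is indispensable, since it is exactly what guarantees that finitely many generators $y_1, \dots, y_n$ suffice to neutralize every member of the defining family, so that $J$ remains finitely generated while the divisorial computation can be run uniformly across $\{D_{P_\beta}\}_{\beta}$. Everything else reduces to the bookkeeping of $t$-maximality and the facts already recorded in Remark \ref{basic-facts-toperation}.
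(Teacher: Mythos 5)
Your proof is correct, but it takes a genuinely different route from the paper. The paper never proves this theorem internally at all: it is quoted from Griffin, and the only portion the paper re-derives is the nontrivial direction (i)$\Rightarrow$(ii), done topologically in the remark following Lemma \ref{easy} --- local finiteness forces $\ad^{\rm c}(Y)=Y\cup\{(0)\}$ (every nonprincipal ultrafilter limit of a locally finite family of primes is $(0)$), hence $\ad^{\rm c}(Y)\subseteq\mathcal E(D)$, and Theorem \ref{pvmdchar} then yields the P$v$MD property, while the $t$-finite character half of (ii) is left to the citation. Your argument is instead self-contained and elementary: you prove both implications from scratch, and its heart --- pinning each $M\in\tmax(D)$ down as one of the centers $P_\alpha$ by producing a finitely generated ideal $J=(x,y_1,\dots,y_n)\subseteq M$ with $(D:J)=D$, hence $J^v=D$, contradicting $J^v\subseteq M^t=M$ --- is essentially the same divisorial computation the paper runs inside its proof of Theorem \ref{pvmdchar} (ii)$\Rightarrow$(i) to establish the finite intersection property; the difference is that there one must pass to an ultrafilter limit to locate the bad prime, whereas your local finiteness hypothesis lets you dispense with ultrafilters entirely and choose the neutralizing generators $y_j$ by hand. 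What the paper's route buys is generality: Theorem \ref{pvmdchar} covers arbitrary essential representations whose center set has constructibly small closure, of which the locally finite case is the degenerate instance handled by Lemma \ref{easy}. What your route buys is a complete, machinery-free proof of the full Griffin equivalence, including the $t$-finite character statement (once $\tmax(D)\subseteq\{P_\alpha:\alpha\in I\}$, the $t$-maximal ideals through a fixed nonzero $x$ lie among the finitely many $P_\alpha$ containing $x$), which the paper's topological remark does not address. All the auxiliary steps check out: the identification of non-invertibility of $x$ in $D_{P_\alpha}$ with $x\in P_\alpha$ is what makes local finiteness usable on centers, the case $n=0$ is absorbed by your contradiction (it would force $xD=D$), and the final step $M=P_{\alpha_j}$ correctly invokes Remark \ref{basic-facts-toperation}(4), i.e.\ $\mathcal E(D)\subseteq\tspec(D)$, so that $P_{\alpha_j}$ is a proper $t$-ideal caught above the $t$-maximal ideal $M$.
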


The following example is given in \cite{ho} and it is a construction of an essential domain that is not a P$v$MD.
It is not easy to find a domain with these properties, and our aim is to go through this construction  by
giving evidence to some topological aspects   that  will be central in the next Theorem \ref{pvmdchar}.

\begin{ex}{\rm (\cite{ho})}\label{example-heinzer}
Let $K$ be a field and let $X_0,X_1\z,X_n,\z,T,U$ be an infinite and countable collection of intedeterminates over $K$. Set $\mathcal X:=\{X_n:n\in \mb N\}$ and consider the Krull domain $R:=K(\mathcal X)[T,U]_{(T,U)}$. Moreover, for any $i\in\mb N$, let $v_i$ be the valuation on $L:=K(\mathcal X,T,U)$ such that $v_i(K(\{X_j:j\neq i\}))=\{0\}$ and $v_i(X_i)=v_i(T)=v_i(U):=1$ (define $v_i$ on polynomials in the canonical way, i.e., just by taking the infimimum of the value of each monomial, and extend it to $L$). For any $i\in \mb N$, let  $V_i$ be the DVR associated to $v_i$, let $\f m_i$ be its maximal ideal,  and set $D:=R\cap\bigcap_{i\in \mb N}V_i$. In \cite{ho}, the authors show that $D$ is an essential domain that is not a P$v$MD. More precisely, they shows that 
$$
Y:=\{\f p\cap D: \f p\mbox{ height-one prime of }R\}\cup \{\f m_i\cap D:i\in \mb N\}
$$
is a collection of essential prime ideals of $D$. 

 We will give now a new proof of the fact that $D$ is not a P$v$MD and it will help to understand the characterization given in   the following  Theorem \ref{pvmdchar}. As a matter of fact, set 
 $$\ms F:=\{V(f)\cap Y:f \in D\cap (T,U)K(\mathcal X)[T,U]_{(T,U)}\}$$ 
 and take finitely many elements $f_1,\z,f_h\in D\cap (T,U)K(\mathcal X)[T,U]_{(T,U)}$. Then there is a natural integer $n$ such that $$(f_1,\z,f_n)D\subseteq D\cap (T,U)K(X_0,\z,X_n)[T,U]_{(T,U)}.$$
 Furthermore, it is straightforward to show that the inclusion $D\cap (T,U)K(X_0,\z,X_n)[T,U]_{(T,U)}\subseteq \f m_i\cap D$ holds, for any $i>n$. It follows that $\f m_i\cap D\in V((f_1,\z,f_n)D)\cap Y$, for any $i>n$, i.e., the collection of sets $\ms F$ has the finite intersection property. Then there is an ultrafilter $\ms U$ on $Y$ such that $\ms F\subseteq \ms U$. By definition, the ultrafilter limit point $Y_{\ms U}:=\{f\in D:V(f)\cap Y\in \ms U\}$ satisfies the inclusion $D\cap (T,U)K(\mathcal X)[T,U]_{(T,U)}\subseteq Y_{\ms U}$. It follows $D_{Y_{\ms U}}\subseteq R$, and then $D_{Y_{\ms U}}$ is not a valuation domain. Moreover, keeping in mind the equality $(\diamond)$ and Remark \ref{basic-facts-toperation}, we have 
 $$Y_{\ms U}\in \ad^{\rm c}(Y) \subseteq \ad^{\rm c}(\tspec(D))=\tspec(D).$$
 Thus $D$ is not a P$v$MD. 
 
 Observe that we have found the bad $t$-prime ideal $Y_{\ms U}$ that makes $D$ fail to be a P$v$MD in the closure of the set of the centers of an essential representation of $D$. 
\end{ex}

In view of Theorem \ref{griffin} and the previous example, the following question arises naturally: let $D$ be an essential domain with an essential representation $\mathcal V$. Is it possible to
put on $\mathcal V$ an extra condition,  weaker than locally finiteness, in order to get that $D$ is a P$v$MD? 

The following Theorem \ref{pvmdchar} will give a positive answer to this question. 
\begin{thm}\label{pvmdchar}
Let $D$ be an integral domain and $\mathcal E(D)$ be the essential prime spectrum of $D$. Then, the following conditions are equivalent.
\begin{enumerate}[\hspace{0.6cm}\rm (i)]
\item $D$ is a PvMD.
\item $D$ is an essential domain and there is an essential representation $\mathcal V:=\{D_{\f p}:\f p\in Y\}$ of $D$,
for some $Y\subseteq \spec(D)$, such that $\ad^{\rm c}(Y)\subseteq
\mathcal E(D)$.
\end{enumerate}
\end{thm}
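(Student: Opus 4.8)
The plan is to prove the two implications separately, reducing everything to the standard characterization (Remark \ref{basic-facts-toperation}(3) together with the definition recalled in Section \ref{basic}) that $D$ is a P$v$MD if and only if $D_M$ is a valuation domain for every $t$-maximal ideal $M$, equivalently $\tspec(D)=\mathcal E(D)$. The implication (i)$\Rightarrow$(ii) will be quick; the real content is (ii)$\Rightarrow$(i), where the key device is the ultrafilter-limit description $(\diamond)$ of the constructible closure. For (i)$\Rightarrow$(ii): if $D$ is a P$v$MD then it is essential, and I take $Y:=\tmax(D)$, so that $\mathcal V:=\{D_M:M\in\tmax(D)\}$ is an essential representation of $D$ by Remark \ref{basic-facts-toperation}(1),(3). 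Since for a P$v$MD one has $\mathcal E(D)=\tspec(D)$ and $\tspec(D)$ is closed in the constructible topology (Remark \ref{basic-facts-toperation}(7)), it follows that $\ad^{\rm c}(Y)\subseteq\ad^{\rm c}(\tspec(D))=\tspec(D)=\mathcal E(D)$, as required.

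For (ii)$\Rightarrow$(i), fix a $t$-maximal ideal $M$; the goal is to show $M\in\mathcal E(D)$, and the idea is to realize $M$ as an ultrafilter limit point of $Y$. Consider the family $\ms F:=\{V(a)\cap Y:a\in M\}$ of subsets of $Y$. The crucial step is to verify that $\ms F$ has the finite intersection property. Given $a_1,\z,a_n\in M$, the intersection of the corresponding sets is $V(I)\cap Y$ with $I:=(a_1,\z,a_n)D\subseteq M$, so I must produce some $\f p\in Y$ with $I\subseteq\f p$. If no such $\f p$ existed, then $ID_{\f p}=D_{\f p}$ for every $\f p\in Y$, and using $D=\bigcap_{\f p\in Y}D_{\f p}$ together with the identity $(D:I)=\bigcap_{\f p\in Y}(D_{\f p}:ID_{\f p})$ one obtains $(D:I)=D$, whence $I^v=D$; since $I$ is finitely generated, $I^t=I^v=D$, contradicting $I\subseteq M=M^t\subsetneq D$ (which forces $I^t\subseteq M\subsetneq D$).

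Once the finite intersection property is established, I extend $\ms F$ to an ultrafilter $\ms U$ on $Y$ (Remark \ref{beginning-ultra}(1)). Then every $a\in M$ satisfies $V(a)\cap Y\in\ms U$, that is $M\subseteq Y_{\ms U}$. By $(\diamond)$ and the hypothesis, $Y_{\ms U}\in\ad^{\rm c}(Y)\subseteq\mathcal E(D)\subseteq\tspec(D)$, so $Y_{\ms U}$ is a proper $t$-prime ideal containing the $t$-maximal ideal $M$; by maximality $M=Y_{\ms U}\in\mathcal E(D)$, whence $D_M$ is a valuation domain. As $M$ was an arbitrary $t$-maximal ideal, $D$ is a P$v$MD.

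The main obstacle is precisely the finite intersection property of $\ms F$, i.e.\ the assertion that every finitely generated ideal contained in a $t$-maximal ideal $M$ is contained in some member $\f p$ of $Y$. This is the one place where the essential representation $D=\bigcap_{\f p\in Y}D_{\f p}$ really enters, through the local-global formula for $(D:I)$ and the coincidence $I^t=I^v$ for finitely generated $I$; everything afterward is a formal manipulation of ultrafilters and the definition of $t$-maximality.
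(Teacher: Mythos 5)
Your proof is correct and follows essentially the same route as the paper's: the same ultrafilter construction on $\ms F=\{V(a)\cap Y:a\in M\}$, the same use of the essential representation to force $(D:I)=D$ and contradict $I^t\subseteq M^t=M$, and the same identification $M=Y_{\ms U}\in\mathcal E(D)$ via $(\diamond)$. The only differences are cosmetic: you take $Y:=\tmax(D)$ rather than $\tspec(D)$ in (i)$\Rightarrow$(ii) (both work, by Remark \ref{basic-facts-toperation}(1),(3),(7)), and in the hard direction you get $Y_{\ms U}\in\tspec(D)$ directly from $\ad^{\rm c}(Y)\subseteq\mathcal E(D)\subseteq\tspec(D)$, where the paper re-derives it from $Y\subseteq\tspec(D)$ and the closedness of $\tspec(D)$.
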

\begin{proof} (i)$\Longrightarrow$(ii). Assume that $D$ is a P$v$MD and take $Y:=\tspec(D)$. Applying \cite[Corollary 2.10]{fl2}, it follows easily that $Y$ is closed, with respect to the constructible topology and, by assumption $Y\subseteq \mathcal E(D)$. Finally, it sufficies to note that $\{D_{\f p}:\f p\in Y\}$ is an essential representation of $D$.

(ii)$\Longrightarrow$(i). Let $\f m$ be a $t$-maximal ideal of $D$ and set
$$
\ms F:=\{V(x)\cap Y:x\in\f m\}
$$
We claim that $\ms F$ has the finite intersection property. If not,
there exist elements $x_1,\z,x_n\in \f m$ such that
$V(x_1,\z,x_n)\cap Y=\emptyset$. Thus, if $\f a:=(x_1,\z,x_n)D$, for any $\f p\in Y$ there is an element $d\in \f a-\f p$. For any $x\in (D:\f a)$ we have $dx\in D$, that is, $x\in D_{\f p}$. Since $\mathcal V$ is an essential representation, we infer that $(D:\f a)=D$, i.e. $\f a^v=D$. Since $\f a\subseteq \f m$ and $\f m\in \tmax(D)$, we have $\f a^v=\f m^t=\f m=D$, a contradition.
 Since $\ms F$ has
the finite intersection property, we can pick an ultrafilter $\ms U$
on $Y$ extending $\ms F$. Then, it follows by definition $\f
m\subseteq Y_{\ms U}:=\{x\in D:V(x)\cap Y\in \ms U\}$. Now not that,
by \cite[Lemma 3.17(1)]{ka} and \cite[Corollary 2.10]{fl2},
$\ad^{\rm c}(Y)\subseteq \tspec(D)$, and thus $Y_{\ms U}\in
\tspec(D)$. It follows $\f m=Y_{\ms U}$. On the other hand,
$\ad^{\rm c}(Y)\subseteq \mathcal E(D)$, and thus $D_{\f m}=D_{Y_{\ms U}}$
is a valuation domain. The proof is now complete.
\end{proof}
 
\begin{cor}\label{pvmdcharcor} Let $D$ be an essential domain that admits an essential representation $\mathcal V$ such that the set of the centers in $D$ of the valuation domains in $\mathcal V$ is closed, with respect to the constructible topology. Then $D$ is a P$v$MD.
\end{cor}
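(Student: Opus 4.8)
The plan is to deduce this statement immediately from the implication (ii)$\Longrightarrow$(i) of Theorem \ref{pvmdchar}, by checking that the hypothesis here is exactly a repackaging of condition (ii) there. First I would let $Y$ denote the set of centers in $D$ of the valuation overrings belonging to $\mathcal V$. Since each $V\in\mathcal V$ is \emph{essential} for $D$, it is by definition a localization of $D$, hence $V=D_{\f p}$ where $\f p$ is its center in $D$. Consequently $\mathcal V=\{D_{\f p}:\f p\in Y\}$, which is precisely the form of essential representation required in Theorem \ref{pvmdchar}(ii), and $D=\bigcap\{D_{\f p}:\f p\in Y\}$ remains an essential representation.

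Next I would verify the inclusion $Y\subseteq \mathcal E(D)$. For every $\f p\in Y$ the localization $D_{\f p}$ is, by construction, a valuation domain; but membership in $\mathcal E(D)$ is defined by exactly this property, namely $\mathcal E(D)=\{\f p\in\spec(D):D_{\f p}\text{ is a valuation domain}\}$. Hence each center lies in $\mathcal E(D)$.

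Finally I would invoke the standing hypothesis that the set of centers $Y$ is closed with respect to the constructible topology, which gives $\ad^{\rm c}(Y)=Y$. Combining this with the previous step yields $\ad^{\rm c}(Y)=Y\subseteq \mathcal E(D)$, so condition (ii) of Theorem \ref{pvmdchar} is satisfied with this particular choice of $Y$. Applying the implication (ii)$\Longrightarrow$(i) of that theorem, we conclude that $D$ is a P$v$MD, as desired.

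There is no substantive obstacle here, since the result is a direct specialization of Theorem \ref{pvmdchar}: the closure hypothesis placed on the set of centers is exactly what collapses $\ad^{\rm c}(Y)$ to $Y$ and thereby produces the inclusion $\ad^{\rm c}(Y)\subseteq\mathcal E(D)$ demanded by the theorem. The only point meriting a word of care is the initial reduction identifying $\mathcal V$ with $\{D_{\f p}:\f p\in Y\}$, i.e. recognizing that essentiality of the members of $\mathcal V$ lets one pass freely between the overrings in $\mathcal V$ and their centers in $\spec(D)$.
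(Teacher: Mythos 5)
Your proposal is correct and follows exactly the paper's route: the paper's proof is the single line ``Apply Theorem \ref{pvmdchar}'', and your write-up simply makes explicit the routine verification (identifying $\mathcal V$ with $\{D_{\f p}:\f p\in Y\}$ via essentiality, noting $Y\subseteq\mathcal E(D)$, and using closedness to get $\ad^{\rm c}(Y)=Y$) that the authors left implicit.
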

\begin{proof}
Apply Theorem \ref{pvmdchar}.
\end{proof}
\begin{cor}
An integral domain $D$ is a P$v$MD if and only if $D$ is essential and $\mathcal E(D)$ is closed, with respect to the constructible topology. 
\end{cor}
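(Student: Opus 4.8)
The plan is to read both implications off Theorem~\ref{pvmdchar} together with the facts gathered in Remark~\ref{basic-facts-toperation}, so that essentially no new argument is required.

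For the forward implication, suppose $D$ is a P$v$MD. Then $D$ is essential by Remark~\ref{basic-facts-toperation}(3), since $D_{\f p}$ is a valuation domain for each $t$-maximal ideal $\f p$. Moreover, for a P$v$MD one has $\mathcal E(D)=\tspec(D)$, and by Remark~\ref{basic-facts-toperation}(7) the set $\tspec(D)$ is closed in the constructible topology; hence $\mathcal E(D)$ is closed as well.

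For the converse, assume $D$ is essential and that $\mathcal E(D)$ is closed. Since $D$ is essential, it admits an essential representation $\mathcal V$; let $Y\subseteq \spec(D)$ be the set of centers in $D$ of the valuation overrings belonging to $\mathcal V$. Each such center $\f p$ is the center of an essential valuation overring, which therefore coincides with $D_{\f p}$ and is a valuation domain, so $\f p\in\mathcal E(D)$; thus $Y\subseteq \mathcal E(D)$. Using monotonicity of the constructible closure together with the hypothesis that $\mathcal E(D)$ is closed, we obtain
$$\ad^{\rm c}(Y)\subseteq \ad^{\rm c}(\mathcal E(D))=\mathcal E(D).$$
Since $\mathcal V=\{D_{\f p}:\f p\in Y\}$ is then an essential representation of $D$ with $\ad^{\rm c}(Y)\subseteq\mathcal E(D)$, the implication (ii)$\Rightarrow$(i) of Theorem~\ref{pvmdchar} shows that $D$ is a P$v$MD.

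I do not expect any genuine obstacle here, as the substantive content has been front-loaded into Theorem~\ref{pvmdchar}. The only step worth a moment's care is the inclusion $Y\subseteq\mathcal E(D)$, i.e.\ that the center of an essential valuation overring $V$ of $D$ is a prime $\f p$ with $V=D_{\f p}$ a valuation domain; this is immediate from the definition of an essential prime. One could equivalently phrase the converse as a direct application of Corollary~\ref{pvmdcharcor} in the special case $Y=\mathcal E(D)$, after verifying that $\{D_{\f p}:\f p\in\mathcal E(D)\}$ is itself an essential representation of $D$.
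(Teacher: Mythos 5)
Your proof is correct and follows essentially the same route as the paper: the forward direction is identical (via $\mathcal E(D)=\tspec(D)$ and Remark~\ref{basic-facts-toperation}(7)), and your converse, whether phrased through monotonicity of $\ad^{\rm c}$ applied to an arbitrary essential representation or, as you note at the end, by taking $Y=\mathcal E(D)$ directly, is just the paper's own application of Corollary~\ref{pvmdcharcor} (itself an instance of Theorem~\ref{pvmdchar}). The one step you rightly flag, that centers of essential valuation overrings lie in $\mathcal E(D)$ with $V=D_{\f p}$, is indeed immediate from the definitions, so there is nothing missing.
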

\begin{proof}
If $D$ is a P$v$MD, then $\mathcal E(D)=\tspec(D)$ and it is closed with respect to the constructible topology. Conversely, if $D$ is essential, then $\{D_{\f p}:\f p\in \mathcal E(D)\}$ is clearly an essential representation of $D$. Since, by assumption, $\mathcal E(D)$ is closed, the conclusion follows  by Corollary \ref{pvmdcharcor}.
\end{proof}
\begin{cor}\label{compactness-zariski}
Let $D$ be an integral domain. Then, the following conditions are equivalent.
\begin{enumerate}[\rm (i)]
\item $D$ is a P$v$MD.
\item $D$ is an essential domain and it admits an essential representation $\{D_{\f p}:\f p\in Y \}$, where $Y\subseteq \spec(D)$ is compact, with respect to the Zariski topology. 
\end{enumerate}
\end{cor}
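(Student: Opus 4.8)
The plan is to derive both implications from Theorem \ref{pvmdchar}, exploiting the interplay between Zariski-compactness and the constructible closure $\ad^{\rm c}$. The guiding observation is that on $\spec(D)$ the constructible topology is compact and refines the Zariski topology, so any constructibly closed subset is constructibly compact and hence Zariski-compact (a cover by Zariski-open sets is, a fortiori, a cover by constructibly open sets, so it admits a finite subcover). This yields (i)$\Longrightarrow$(ii) almost immediately: assuming $D$ is a P$v$MD I would take $Y:=\tspec(D)$, exactly as in the proof of Theorem \ref{pvmdchar}. By Remark \ref{basic-facts-toperation}(7) the set $Y$ is constructibly closed, hence Zariski-compact by the above, and $\{D_{\f p}:\f p\in Y\}$ is an essential representation, since each $D_{\f p}$ is a valuation domain and $D\subseteq\bigcap_{\f p\in Y}D_{\f p}\subseteq\bigcap_{\f m\in\tmax(D)}D_{\f m}=D$ by Remark \ref{basic-facts-toperation}(1).

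For (ii)$\Longrightarrow$(i), by Theorem \ref{pvmdchar} it suffices to prove $\ad^{\rm c}(Y)\subseteq\mathcal E(D)$. First I would record that $\mathcal E(D)$ is stable under generization: if $\f q\subseteq\f p$ and $D_{\f p}$ is a valuation domain, then $D_{\f q}=(D_{\f p})_{\f q D_{\f p}}$ is a localization of a valuation domain, hence itself a valuation domain, so $\f q\in\mathcal E(D)$. Since the centers of an essential representation automatically lie in $\mathcal E(D)$, we have $Y\subseteq\mathcal E(D)$; therefore it is enough to show that every member of $\ad^{\rm c}(Y)$ is a generization of some point of $Y$, as generization-stability then forces it into $\mathcal E(D)$.

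The crux is the claim that Zariski-compactness of $Y$ confines $\ad^{\rm c}(Y)$ to generizations of points of $Y$. Using $(\diamond)$, I would take $\f q=Y_{\ms U}\in\ad^{\rm c}(Y)$ for some ultrafilter $\ms U$ on $Y$, and argue by contradiction: if $\f q\not\subseteq\f p$ for every $\f p\in Y$, then for each $\f p\in Y$ one may choose $a_{\f p}\in\f q\setminus\f p$, so that $\{D(a_{\f p}):\f p\in Y\}$ is a Zariski-open cover of $Y$. Compactness yields finitely many $a_1,\z,a_n\in\f q$ with $Y\cap V(a_1,\z,a_n)=\emptyset$. But $a_i\in\f q=Y_{\ms U}$ means $V(a_i)\cap Y\in\ms U$ for each $i$, and closure under finite intersection gives $V(a_1,\z,a_n)\cap Y=\bigcap_i(V(a_i)\cap Y)\in\ms U$, contradicting $\emptyset\notin\ms U$. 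Hence $\f q\subseteq\f p$ for some $\f p\in Y\subseteq\mathcal E(D)$, so $\f q\in\mathcal E(D)$. This establishes $\ad^{\rm c}(Y)\subseteq\mathcal E(D)$, and Theorem \ref{pvmdchar} completes the argument.

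The main obstacle is precisely this last claim, namely turning the purely topological hypothesis of Zariski-compactness into control over ultrafilter limit points, i.e.\ the inclusion $\ad^{\rm c}(Y)\subseteq\{\f q:\f q\subseteq\f p\text{ for some }\f p\in Y\}$. Everything else (generization-stability of $\mathcal E(D)$ and the reduction to Theorem \ref{pvmdchar}) is routine; the real content is the passage from "$\f q$ escapes every $\f p\in Y$" to a finite family whose traces on $Y$ simultaneously belong to $\ms U$ and have empty intersection.
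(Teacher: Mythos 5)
Your proof is correct, and its skeleton coincides with the paper's: both implications are funneled through Theorem \ref{pvmdchar}, with the same two ingredients --- for (i)$\Rightarrow$(ii) the choice $Y:=\tspec(D)$, which is constructibly closed and hence Zariski-compact, and for (ii)$\Rightarrow$(i) the observation that $\mathcal E(D)$ is generization-stable (if $\f q\subseteq \f p\in Y$ then $D_{\f q}\supseteq D_{\f p}$ is again a valuation domain) together with the fact that the constructible closure of a Zariski-compact set consists of generizations of its points. The one genuine difference is how that last fact is handled. The paper introduces $Y^{\rm gen}$, checks that $\{D_{\f q}:\f q\in Y^{\rm gen}\}$ is still an essential representation, and then simply cites \cite[Proposition 2.6]{fifolo} for the topological step $\ad^{\rm c}(Y)\subseteq Y^{\rm gen}$; you instead prove that inclusion from scratch via the ultrafilter description $(\diamond)$: if $Y_{\ms U}\not\subseteq\f p$ for every $\f p\in Y$, the sets $D(a_{\f p})$ with $a_{\f p}\in Y_{\ms U}\setminus\f p$ form a Zariski-open cover of $Y$, a finite subcover yields $a_1,\z,a_n\in Y_{\ms U}$ with $V(a_1,\z,a_n)\cap Y=\emptyset$, and closure of $\ms U$ under finite intersections forces $\emptyset\in\ms U$, a contradiction. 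This argument is exactly right (it is, in substance, a proof of the cited proposition for $\spec(D)$), so your version buys self-containedness and makes the mechanism visible --- compactness converts ``$Y_{\ms U}$ escapes every point of $Y$'' into a finite obstruction incompatible with the ultrafilter --- at the cost of slightly more length; the paper's version is shorter and works at the level of general spectral spaces, but hides the key step behind a reference. A further cosmetic difference: you apply Theorem \ref{pvmdchar} directly to the given $Y$, whereas the paper first replaces the representation by the one indexed by $Y^{\rm gen}$; both are legitimate, since in your reading the hypothesis $\ad^{\rm c}(Y)\subseteq\mathcal E(D)$ is verified for $Y$ itself.
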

\begin{proof}
(i)$\Rightarrow$(ii) follows by taking $Y:=\tspec(D)$. 

(ii)$\Rightarrow$(i). Let $Y$ be a compact subspace of $\spec(D)$ such that $\{D_{\f p}:\f p\in Y \}$ is an essential representation of $D$ and set $$Y^{\rm gen}:=\{\f q\in\spec(D):\f q\subseteq \f p,\mbox{ for some }\f p\in Y\}$$
Of course, $\mathcal  V:=\{D_{\f q}:\f q\in Y^{\rm gen}  \}$ is still a representation of $D$. We claim that $\mathcal V$ is also essential since, if $\f q\in Y^{\rm gen}$ and $\f p\in Y$ is such that $\f q\subseteq \f p$, then $D_{\f p}$ is a valuation domain and $D_{\f q}\supseteq D_{\f p}$. The conclusion follows by applying Theorem \ref{pvmdchar} and \cite[Proposition 2.6]{fifolo}. 
\end{proof}
  We give now a natural application of Corollary \ref{pvmdcharcor}.
  
\begin{ex}\label{fiber-product}(see \cite[Theorem 4.1]{fo-ga})
Let $V$ be a valuation domain with residue field   $k$ and let $\pi:V\longrightarrow k$ be the canonical projection. Let  $D$ be a P$v$MD  whose quotient field is $k$. Consider the following pullback diagram:

$$
\CD
R @>>> D \\
@VVV   @VVV \\
V  @>{\pi}>> k
\endCD
$$

\bigskip

We claim that the ring $R:=\pi^{-1}(D)$ is a P$v$MD. 

By \cite[Corollary 1.9]{fo-ga}, $\pi^{-1}(\f p)$ is a $t$-prime ideal of $R$, for any $t$-prime ideal $\f p$ of $D$ and it is easy to check that   $\pi^{-1}(D_{\f p})=R_{\pi^{-1}(\f p)}$.

 Thus, keeping in mind that $D$ is a P$v$MD whose quotient field is $k$, \cite[Theorem 2.4(1)]{fo} implies that the collection $\mathcal V:=\{R_{\pi^{-1}(\f p)}:\f p\in \tspec(D)\}$ is an essential representation of $R$. 

The centers in $R$ of the valuation domains in $\mathcal V$ are the inverse images $\pi^{-1}(\f p)$, for $\f p \in \tspec(D)$.  This set is closed with respect to the constructible topology, by \cite[Chapter 3, Exercise 29]{AM} and Remark \ref{basic-facts-toperation}(7). The conclusion follows by Corollary \ref{pvmdcharcor}.
\end{ex}
 
The following Lemma will be useful to explain why Griffin's Theorem \ref{griffin} follows from Theorem \ref{pvmdchar}.
 
\begin{lem}\label{easy} Let $A$ be a ring and $Y$ be an infinite subset of $\spec(A)$ such
that every nonzero element of $A$ belongs to only finitely many prime ideals in $Y$.
Then, $A$ is an integral domain and $\ad^{\rm c}(Y)=Y\cup \{(0)\}$.
\end{lem}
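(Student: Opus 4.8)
The plan is to combine the description $(\diamond)$ of the constructible closure in terms of ultrafilter limit points with the dichotomy between principal and nonprincipal ultrafilters recorded in Remark \ref{beginning-ultra}. Everything reduces to understanding which prime ideals arise as limits $Y_{\ms U}$, and the finiteness hypothesis is what pins these down.

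First I would dispose of the claim that $A$ is a domain. Since $Y$ is infinite it is in particular nonempty, so $A$ has a prime ideal and is not the zero ring. Suppose $ab=0$ with $a,b$ nonzero. Then $ab\in\f p$ for every $\f p\in Y$, so each prime in $Y$ contains $a$ or $b$; that is, $Y=(V(a)\cap Y)\cup(V(b)\cap Y)$. By hypothesis both sets on the right are finite (as $a,b\neq 0$), forcing $Y$ to be finite, a contradiction. Hence $A$ is an integral domain and, in particular, $(0)\in\spec(A)$.

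Next I would compute $\ad^{\rm c}(Y)$ using $(\diamond)$, namely $\ad^{\rm c}(Y)=\{Y_{\ms U}:\ms U\text{ an ultrafilter on }Y\}$, by splitting into cases according to whether $\ms U$ is principal. If $\ms U=\ms U_{\f p}$ is the principal ultrafilter at some $\f p\in Y$, then directly from the definition $Y_{\ms U_{\f p}}=\{a\in A:\f p\in V(a)\cap Y\}=\f p$, so these limits recover exactly the points of $Y$. If instead $\ms U$ is nonprincipal, then by Remark \ref{beginning-ultra}(2) it contains no finite subset of $Y$; since for any nonzero $a\in A$ the set $V(a)\cap Y$ is finite by hypothesis, such a set cannot lie in $\ms U$, so no nonzero $a$ belongs to $Y_{\ms U}$. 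As $V(0)\cap Y=Y\in\ms U$ always, this gives $Y_{\ms U}=(0)$.

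Combining the two cases yields $\ad^{\rm c}(Y)\subseteq Y\cup\{(0)\}$. For the reverse inclusion, $Y\subseteq\ad^{\rm c}(Y)$ holds trivially via the principal ultrafilters above, and $(0)$ is attained because $Y$, being infinite, admits at least one nonprincipal ultrafilter by Remark \ref{beginning-ultra}(3), whose limit is $(0)$. The argument is essentially self-contained once $(\diamond)$ is in hand, and the only point requiring care—which I regard as the crux—is the interaction between the finiteness hypothesis and nonprincipal ultrafilters: the hypothesis is exactly what forces every nonprincipal ultrafilter limit to collapse to $(0)$, and the infinitude of $Y$ is exactly what guarantees that $(0)$ actually occurs as such a limit.
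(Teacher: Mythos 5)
Your proof is correct and follows essentially the same route as the paper: both compute $\ad^{\rm c}(Y)$ via $(\diamond)$ by observing that principal ultrafilters recover exactly the points of $Y$, while for a nonprincipal ultrafilter $\ms U$ the finiteness hypothesis forces $Y_{\ms U}=(0)$ (since $V(a)\cap Y$ is finite for $a\neq 0$ and a nonprincipal ultrafilter contains no finite set), with the infinitude of $Y$ guaranteeing that such a $\ms U$ exists. The only minor deviation is that you establish that $A$ is a domain by a direct zero-divisor argument ($ab=0$ with $a,b\neq 0$ would force $Y=(V(a)\cap Y)\cup(V(b)\cap Y)$ to be finite), whereas the paper gets it for free from the fact that the ultrafilter limit $Y_{\ms U}=(0)$ is automatically a prime ideal by \cite[Lemma 2.4]{Calota}; both are valid, and yours merely makes explicit the two inclusions the paper leaves implicit.
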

\begin{proof}
Since $Y$ is infinite, take a non principal ultrafilter  $\ms U$ on $Y$, and let
$$
Y_{\ms U}:=\{x\in A:V(x)\cap Y\in \ms U\}
$$
be the ultrafilter limit prime ideal of $Y$, with respect to $\ms U$
(see \cite[Lemma 2.4]{Calota}).
Thus, for any element $x\in Y_{\ms
U}$, the set $V(x)\cap Y \in \ms U$, thus it is infinite since the ultrafilter is not trivial and, by assumption, $x=0$. This proves that $Y_{\ms U}=(0)$. Thus $(0)$ is a prime ideal and so $A$ is an integral domain. Furthermore, $(0)\in \ad^{\rm c}(Y)$. Since the equality $Y_{\ms U}=(0)$ holds for any non principal ultrafilter  $\ms U$ on $Y$, the conclusion follows immediately from the equality $(\diamond)$ at page \pageref{diamond}.
\end{proof}

\begin{oss}
Now we observe that the nontrivial part (i)$\Longrightarrow$(ii) of Griffin's characterization of Krull-type domains (Theorem \ref{griffin}(2)) follows from Theorem \ref{pvmdchar}. Suppose $D$ is a Krull-type domain and let $\mathcal V:=\{D_{\f p}:\f p\in Y\}$  be an essential and locally finite representation of $D$ (for some subset  $Y$ of $ \spec(D)$). Of course, for any $d\in D-\{0\}$, only finitely many prime ideals in $Y$ contain $d$. Thus, if $Y$ is infinite, by Lemma \ref{easy} we have 
$$\ad^{\rm c}(Y)=Y\cup \{0\}\subseteq \mathcal E(D):=\{\f p\in \spec(D):D_{\f p} \mbox{ is a valuation domain}\}
$$
If $Y$ is finite, it is clearly closed, since   the constructible topology is Hausdorff, in particular. Thus, in any case, we have $\ad^{\rm c}(Y)\subseteq \mathcal E(D)$ and, by Theorem \ref{pvmdchar}, $D$ is a P$v$MD.
\end{oss}  

\begin{cor}\label{sopranelli}
{ Let $K\subseteq L$ be an algebraic field extension, $A$ be a P$v$MD whose field of fractions is $K$ and $B$ be an integrally closed essential domain with field of fractions $L$. Moreover, suppose  that $B$
admits an essential representation $\mathcal V$ such that, for any
$V\in \mathcal V$, the center of $V$ in $A$ is a t-ideal. Then $B$
is a P$v$MD.}
\end{cor}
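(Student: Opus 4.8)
The plan is to verify condition (ii) of Theorem \ref{pvmdchar} for $B$, using the given essential representation $\mathcal V$. Note first that the hypothesis \lq\lq center of $V$ in $A$" presupposes $A\subseteq V$ for every $V\in\mathcal V$, whence $A\subseteq\bigcap_{V\in\mathcal V}V=B$; let $\iota\colon A\hookrightarrow B$ be the inclusion, inducing $\iota^\star\colon\spec(B)\to\spec(A)$, $\f p\mapsto\f p\cap A$. Each $V\in\mathcal V$ is a localization $B_{\f q}$ of $B$ at its center $\f q\in\spec(B)$, so, writing $Y$ for the set of these centers, we have $\mathcal V=\{B_{\f q}:\f q\in Y\}$, an essential representation of $B$. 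For $\f q\in Y$ the center in $A$ of the corresponding $V$ is exactly $\f q\cap A=\iota^\star(\f q)$, so the hypothesis reads $\iota^\star(Y)\subseteq\tspec(A)$.

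First I would show $\iota^\star(\ad^{\rm c}(Y))\subseteq\tspec(A)$. The map $\iota^\star$ is continuous for the constructible topologies, since the preimage of a basic clopen set $D(a)\cap V(a_1,\z,a_n)$ of $\spec(A)$ is the basic clopen set $D(\iota(a))\cap V(\iota(a_1),\z,\iota(a_n))$ of $\spec(B)$. Continuity gives $\iota^\star(\ad^{\rm c}(Y))\subseteq\ad^{\rm c}(\iota^\star(Y))$, and since $\iota^\star(Y)\subseteq\tspec(A)$ while, by Remark \ref{basic-facts-toperation}(7), $\tspec(A)$ is closed in the constructible topology, we obtain $\ad^{\rm c}(\iota^\star(Y))\subseteq\tspec(A)$, proving the claim.

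The heart of the argument is then to deduce, for an arbitrary $\f p\in\ad^{\rm c}(Y)$, that $B_{\f p}$ is a valuation domain, i.e. that $\ad^{\rm c}(Y)\subseteq\mathcal E(B)$. Fix such a $\f p$ and set $W:=A_{\f p\cap A}$. By the previous step $\f p\cap A\in\tspec(A)$, and since $A$ is a P$v$MD, $W$ is a valuation domain with quotient field $K$. Let $W'$ be the integral closure of $W$ in $L$; because $L/K$ is algebraic, $W'$ is a Pr\"ufer domain with quotient field $L$. Now $W=A_{\f p\cap A}\subseteq B_{\f p}$ (every element of $A\setminus\f p$ is invertible in $B_{\f p}$), and $B_{\f p}$ is integrally closed, being a localization of the integrally closed domain $B$; hence $W'\subseteq B_{\f p}$. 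Thus $B_{\f p}$ is a quasi-local overring of the Pr\"ufer domain $W'$, and therefore a valuation domain (every overring of a Pr\"ufer domain is Pr\"ufer, and a quasi-local Pr\"ufer domain is a valuation domain).

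This yields $\ad^{\rm c}(Y)\subseteq\mathcal E(B)$, so $\mathcal V=\{B_{\f q}:\f q\in Y\}$ realizes condition (ii) of Theorem \ref{pvmdchar}, and $B$ is a P$v$MD. The main obstacle I anticipate is this last valuation-theoretic step, passing from \lq\lq $A_{\f p\cap A}$ is a valuation domain" to \lq\lq $B_{\f p}$ is a valuation domain": this is precisely where the hypotheses that $L/K$ is algebraic and that $B$ is integrally closed enter, through the facts that the integral closure of a valuation domain in an algebraic field extension is Pr\"ufer and that quasi-local overrings of Pr\"ufer domains are valuation domains. I would take care of the quotient-field bookkeeping, checking that $W'$ and $B_{\f p}$ both have quotient field $L$ so that \lq\lq overring" is literally correct.
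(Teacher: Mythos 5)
Your proof is correct and follows essentially the same route as the paper's: both reduce the statement to condition (ii) of Theorem \ref{pvmdchar} by using continuity of $\iota^\star$ for the constructible topologies to get $\iota^\star(\ad^{\rm c}(Y))\subseteq\ad^{\rm c}(\iota^\star(Y))\subseteq\tspec(A)$ (your appeal to the closedness of $\tspec(A)$ from Remark \ref{basic-facts-toperation}(7) is the same fact the paper invokes via \cite[Lemma 2.4 and Proposition 2.5]{Calota}), and then conclude with the identical valuation-theoretic step that the integral closure of $A_{\f p\cap A}$ in $L$ is a Pr\"ufer domain contained in the integrally closed local ring $B_{\f p}$, forcing $B_{\f p}$ to be a valuation domain. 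Your explicit bookkeeping ($A\subseteq B$, quotient fields of $W'$ and $B_{\f p}$, integral closedness of localizations) only makes implicit steps of the paper's proof explicit.
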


\begin{proof}
Let $X$ be the subset of $\spec(B)$ such that $\mathcal V=\{B_{\f h}:\f h\in X\}$, and let
$\iota^\star:\spec(B)\longrightarrow\spec(A)$ denote the map naturally induced by
the inclusion $\iota:A\longrightarrow B$. By
\cite[Chapter 3, Exercise 29]{AM}, the map $\iota^\star$ is continuous (and closed), if $\spec(A),\spec(B)$ are equipped with the constructible topology, and then $\iota^\star(\ad^{\rm c}(X))\subseteq \ad^{\rm c}(\iota^\star(X))$. On the other hand, $\iota^\star(X)$ is clearly the set of all centers in $A$ of the valuation domains in $\mathcal V$ and thus, keeping in mind assumption and applying (\cite[Lemma 2.4 and
Proposition 2.5]{Calota}, we have  $\ad^{\rm c}(\iota^\star(X))\subseteq\tspec(A)$. Now take a prime ideal $\f p\in \ad^{\rm c}(X)$. { Since we have $\iota^\star(\ad^{\rm c}(X))\subseteq \tspec(A)$, $\f p\cap A$ is a $t$-prime ideal of $A$ and, since $A$ is a P$v$MD, $A_{\f p\cap A}$ is a valuation domain such that $A_{\f p\cap A}\subseteq B_{\f p}$. Then the integral closure $\overline{A_{\f p\cap A}}^{(L)}$ of $A_{\f p\cap A}$ in $L$ is a Pr\"ufer domain whose field of fractions is $L$ and, since $B$ is integrally closed, we have $\overline{A_{\f p\cap A}}^{(L)}\subseteq B_{\f p}$. It follows that $B_{\f p}$ is a valuation domain, being it a local overring of a Pr\"ufer domain. Now it sufficies to apply Theorem \ref{pvmdchar}. }
\end{proof}

\begin{ex} Let $\SQ \subset K$ be a finite field extension and consider a DVR overring $(V,M_V)$ of $\SZ[X]$ such that $A = V \cap \SQ[X]$ is a P$v$MD (\cite[Theorem 5.8]{lt}). Let $(W,M_W)$ be an extension of $V$ to $K(X)$. Then $B = W \cap K[X]$ is a P$v$MD.

\medskip

We have to show that $W \cap \bigcap_{Q \in \Max(K[X])}K[X]_Q$ is an essential representaton of $B$. It is easy to check that $K[X]_Q = B_{Q \cap B}$, for each $Q \in \Max(K[X])$ (these ideals $Q \cap B$ are exactly the uppers to zero of $B$)

As regards $W$,  $M_W \cap B \supset M_W \cap A = M_V \cap A$ (since $W$ is an extension of $V$) and it is known that $A_{M_V \cap A} = V$ (\cite[Theorem 5.8 \& Lemma 1.3 (2)]{lt}). Then $V \subset B_{M_W \cap B}$ and so $B_{M_W \cap B}$ is a valuation domain since it contains the integral closure of $V$ in $K$, that is Pr\"ufer (\cite[Theorem 22.3]{gilmer}). For dimension consideration, it follows that 
$B_{M_W \cap B} = W$. 

Now $W$ is centered in $M_V \cap A$ that is a $t$-ideal of $A$ (since it is minimal over a principal ideal   by \cite[proof of Lemma 1.3 (1)]{lt}). All the valuation overrings of $K[X]$ are centered in the upper to zero primes of $A$, which are also $t$-primes of $A$.
Thus $B$ is a P$v$MD by Corollary \ref{sopranelli}.

\end{ex}

\begin{cor}{\rm (\cite[Corollary 3.9]{ka} and \cite[Proposition 5.1]{mo-za})}\label{sottointersezioni}
Let $A$ be a PvMD and let $X$ be a non empty collection of $t$-prime
ideals of $A$.
Then $\bigcap\{A_{\f p}:\f p\in X\}$ is a PvMD.
\end{cor}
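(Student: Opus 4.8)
The plan is to deduce the statement from Theorem \ref{pvmdchar}, by exhibiting a convenient essential representation of $D:=\bigcap\{A_{\f p}:\f p\in X\}$ whose set of centers has constructible closure inside $\mathcal E(D)$. First I would record the basic inclusions $A\subseteq D\subseteq A_{\f p}$ for every $\f p\in X$, so that all the rings in sight share the quotient field $K$ of $A$. Since $A$ is a P$v$MD we have $\mathcal E(A)=\tspec(A)$ (Remark \ref{basic-facts-toperation}), and therefore $A_{\f p}$ is a valuation domain for every $\f p\in X\subseteq\tspec(A)$.

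Next I would pin down the centers. For $\f p\in X$ set $\f q_{\f p}:=\f pA_{\f p}\cap D$; this is the center of $A_{\f p}$ in $D$, and contracting it further to $A$ gives $\f q_{\f p}\cap A=\f p$, so the assignment $\f p\mapsto\f q_{\f p}$ is injective and $\iota^\star(\f q_{\f p})=\f p$, where $\iota:A\hookrightarrow D$ is the inclusion and $\iota^\star:\spec(D)\to\spec(A)$ the induced map. The key local computation is the identity $D_{\f q_{\f p}}=A_{\f p}$: the inclusion $D_{\f q_{\f p}}\subseteq A_{\f p}$ holds because every $s\in D\setminus\f q_{\f p}$ lies in $D$ but not in the maximal ideal $\f pA_{\f p}$, hence is a unit of $A_{\f p}$, while $A_{\f p}=A_{\f q_{\f p}\cap A}\subseteq D_{\f q_{\f p}}$ follows directly from $A\subseteq D$. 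From $D\subseteq D_{\f q_{\f p}}=A_{\f p}$ together with $\bigcap_{\f p\in X}A_{\f p}=D$ one reads off that $\bigcap_{\f p\in X}D_{\f q_{\f p}}=D$, so that $\mathcal V:=\{D_{\f q_{\f p}}:\f p\in X\}$ is an essential representation of $D$; write $Y:=\{\f q_{\f p}:\f p\in X\}$ for its set of centers, and note $\iota^\star(Y)=X$.

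It then remains to verify the topological hypothesis $\ad^{\rm c}(Y)\subseteq\mathcal E(D)$ of Theorem \ref{pvmdchar}(ii). Here I would invoke the continuity of $\iota^\star$ for the constructible topologies (\cite[Chapter 3, Exercise 29]{AM}), which yields $\iota^\star(\ad^{\rm c}(Y))\subseteq\ad^{\rm c}(\iota^\star(Y))=\ad^{\rm c}(X)$; since $X\subseteq\tspec(A)$ and $\tspec(A)$ is constructibly closed for the P$v$MD $A$ (Remark \ref{basic-facts-toperation}(7)), this forces $\f Q\cap A\in\tspec(A)$ for every $\f Q\in\ad^{\rm c}(Y)$. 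Thus $A_{\f Q\cap A}$ is a valuation domain, and the inclusion $A_{\f Q\cap A}\subseteq D_{\f Q}$ (again from $A\subseteq D$) shows that $D_{\f Q}$ is an overring of a valuation domain with the same quotient field, hence itself a valuation domain; that is, $\f Q\in\mathcal E(D)$. Having checked both clauses of Theorem \ref{pvmdchar}(ii), I would conclude that $D$ is a P$v$MD.

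I expect the main obstacle to be the verification of the closure condition rather than the construction of the essential representation: the identity $D_{\f q_{\f p}}=A_{\f p}$ is a routine localization argument, whereas the passage from $Y$ to its constructible closure must be controlled entirely through $\iota^\star$, and it is the combination of the continuity of $\iota^\star$ with the constructible closedness of $\tspec(A)$ and the principle that an overring of a valuation domain is again a valuation domain that does the real work.
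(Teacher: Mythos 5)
Your proof is correct and follows essentially the same route as the paper: you build the identical essential representation $\{D_{\f q_{\f p}}:\f p\in X\}$ with $\f q_{\f p}:=\f pA_{\f p}\cap D$ and $D_{\f q_{\f p}}=A_{\f p}$, exactly as in the paper's proof. The only difference is that where the paper concludes by citing Corollary \ref{sopranelli} (with $L:=K$), you inline the proof of that corollary's equal-field case --- continuity of $\iota^\star$ for the constructible topology, constructible closedness of $\tspec(A)$, and the fact that an overring of a valuation domain of $K$ is again a valuation domain --- before invoking Theorem \ref{pvmdchar}, which is the same mechanism, mildly simplified since the integral-closure/Pr\"ufer step of Corollary \ref{sopranelli} is unnecessary when $L=K$.
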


\begin{proof}
Set $B:=\bigcap\{A_{\f p}:\f p\in X\}$ and, for any prime ideal $\f
p\in X$, set $\widetilde{\f p}:=\f pA_{\f p}\cap B$. Note that, since
obviously $B_{\widetilde{\f p}}=A_{\f p}$, for any $\f p \in X$, the
collection of rings $\mathcal V:=\{B_{\widetilde{\f p}}:\f
p\in X\}$ is an essential representation of $B$ such that $\widetilde{\f p}\cap A=\f p$ is a $t$-prime ideal of $A$.  Then the statement follows immediately by Corollary \ref{sopranelli},  just by taking $L:=K$.
\end{proof}

Now we give a sufficient condition for an intersection of a family of P$v$MDs to be a P$v$MD. Recall that a family $\mathcal F$ of subsets of subsets of a topological space $X$ is called \emph{a locally finite collection of sets} if for any $x\in X$ there is a neighborhood of $U$ of $X$ such that $\{F\in \mathcal F:F\cap U\neq \emptyset\}$ is finite. 

Let $\{D_i:i\in I\}$ a family of P$v$MDs and set $D:=\bigcap\{D_i:i\in I\}$. We say that $D$ \emph{essential, with respect to the family $\{D_i:i\in I\}$}, if the canonical representation 
$$
\{(D_i)_{\f q}:\f q\in \tspec(D_i), i\in I\}
$$
of $D$ is essential. It follows immediately that if $D$ is essential with respect to $\{D_i:i\in I\}$, then $D_i$ is an overring of $D$, for any $i\in I$.  
\begin{thm}\label{intersezioni}
Let $\{D_i:i\in I\}$ a nonempty collection of P$v$MDs set $D:=\bigcap\{D_i:i\in I\}$, and suppose that $D$ is essential, with respect to the family $\{D_i:i\in I\}$.  Assume also that for any $\f p \in \spec(D)$ there are an element $f\in D-\f p$ and a finitely generated ideal $\f a\subseteq \f p$ such that only for finitely many indices $i\in I$ may exist a t-prime ideal $\f q$ of $D_i$ such that $f\notin \f q$ and $\f a\subseteq \f q\cap D$. Then $D$ is a P$v$MD. 
\end{thm}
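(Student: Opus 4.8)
The plan is to verify hypothesis (ii) of Theorem \ref{pvmdchar}, namely to exhibit an essential representation $\mathcal V=\{D_{\f p}:\f p\in Y\}$ of $D$ with $\ad^{\rm c}(Y)\subseteq \mathcal E(D)$. The natural choice is to take $Y$ to be the set of centers in $D$ of the canonical essential representation $\{(D_i)_{\f q}:\f q\in \tspec(D_i),\ i\in I\}$; that is, for each $i$ and each $\f q\in \tspec(D_i)$ put $\widetilde{\f q}:=\f q(D_i)_{\f q}\cap D$, and let $Y$ collect all such $\widetilde{\f q}$. Since $D$ is assumed essential with respect to $\{D_i:i\in I\}$, each $D_{\widetilde{\f q}}=(D_i)_{\f q}$ is a valuation domain and $\mathcal V=\{D_{\f p}:\f p\in Y\}$ is an essential representation of $D$. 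So the entire content of the theorem reduces to showing $\ad^{\rm c}(Y)\subseteq \mathcal E(D)$.

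To prove that containment I would use the ultrafilter description $(\diamond)$: a point of $\ad^{\rm c}(Y)$ is an ultrafilter limit $Y_{\ms U}=\{x\in D:V(x)\cap Y\in\ms U\}$ for some ultrafilter $\ms U$ on $Y$. Fix such a $\f P:=Y_{\ms U}$; I must show $D_{\f P}$ is a valuation domain. Here is where the combinatorial hypothesis enters. Apply it to $\f P$: there exist $f\in D-\f P$ and a finitely generated ideal $\f a\subseteq\f P$ such that only finitely many indices $i\in I$ admit a $t$-prime $\f q$ of $D_i$ with $f\notin\f q$ and $\f a\subseteq\f q\cap D$. Translating this in terms of $Y$: a center $\widetilde{\f q}\in Y$ lies in $D(f)\cap V(\f a)$ exactly when $f\notin\widetilde{\f q}$ (equivalently $f\notin\f q$) and $\f a\subseteq\widetilde{\f q}$ (equivalently $\f a\subseteq\f q\cap D$). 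The hypothesis thus says that the basic constructible-open set $D(f)\cap V(\f a)$ meets $Y$ only in centers coming from finitely many indices $i$.

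The key step is then to argue that $\f P=Y_{\ms U}$ itself must be the center $\widetilde{\f q}$ of a single valuation domain $(D_i)_{\f q}$, so that $D_{\f P}=(D_i)_{\f q}$ is a valuation domain. Because $f\notin\f P$ we have $V(f)\cap Y\notin\ms U$, hence $D(f)\cap Y\in\ms U$; and because $\f a\subseteq\f P$, each generator of $\f a$ lies in $\f P$, so $V(\f a)\cap Y\in\ms U$ (a finite intersection of sets in $\ms U$). Therefore $(D(f)\cap V(\f a))\cap Y\in\ms U$, and by the finiteness just established this set is contained in $\bigcup_{i\in S}Y_i$ for a finite index set $S$, where $Y_i$ denotes the centers arising from $D_i$. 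By the ultrafilter property an element of $\ms U$ cannot be split into a finite union unless one of the pieces $Y_i\cap(D(f)\cap V(\f a))$ already lies in $\ms U$; fix that index $i$. One then restricts the ultrafilter to $Y_i$ and identifies $Y_{\ms U}$ with an ultrafilter limit of centers of $t$-prime ideals of the single P$v$MD $D_i$. Using that $\ad^{\rm c}(\tspec(D_i))\subseteq\tspec(D_i)$ (Remark \ref{basic-facts-toperation}(7), via \cite{fl} and \cite{Calota}) together with $\widetilde{\f q}\cap D_i$-type bookkeeping, I expect to conclude that $\f P$ is the center of a $t$-prime of $D_i$, whence $D_{\f P}$ is a valuation domain because $D_i$ is a P$v$MD.

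The main obstacle I anticipate is precisely this last identification: passing cleanly from an ultrafilter limit of the $D$-centers $\widetilde{\f q}$ to an ultrafilter limit of the corresponding $t$-primes $\f q$ inside the fixed $D_i$, and making sure the contraction-and-localization commute so that $D_{\f P}=(D_i)_{\f q}$ holds on the nose. The finiteness hypothesis is tailored to force the ultrafilter to concentrate on one $D_i$, which is the crucial reduction; once concentrated on a single P$v$MD, the closure of $\tspec(D_i)$ under ultrafilter limits does the rest. I would carry out the steps in the order: (1) define $Y$ and check $\mathcal V$ is an essential representation; (2) reduce to showing $\ad^{\rm c}(Y)\subseteq\mathcal E(D)$ and take an arbitrary $Y_{\ms U}$; (3) apply the finiteness hypothesis to locate a single index $i$ with $Y_i\cap D(f)\cap V(\f a)\in\ms U$; (4) concentrate $\ms U$ on that $Y_i$ and identify $Y_{\ms U}$ as the center of a $t$-prime of $D_i$; (5) conclude $D_{Y_{\ms U}}$ is a valuation domain and invoke Theorem \ref{pvmdchar}.
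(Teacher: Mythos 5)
Your proposal is correct, and its skeleton --- take $Y$ to be the set of centers of the canonical representation (which is an essential representation by hypothesis), prove $\ad^{\rm c}(Y)\subseteq \mathcal E(D)$, and invoke Theorem \ref{pvmdchar} --- is the same as the paper's; where you differ is in the mechanism for establishing closedness. The paper observes that each $Y_i:=\iota_i^\star(\tspec(D_i))$ is closed in the constructible topology (because $\tspec(D_i)$ is closed and $\iota_i^\star$ is continuous, hence closed, between the compact Hausdorff constructible spaces, by \cite[Chapter 3, Exercise 29]{AM}), reads the finiteness hypothesis as saying that the family $\{Y_i:i\in I\}$ is \emph{locally finite} (the basic constructible-open set $D(f)\cap V(\f a)$ is a neighborhood of $\f p$ meeting only finitely many $Y_i$), and then cites \cite[Theorem 1.1.11]{Engelking} --- a locally finite union of closed sets is closed --- to conclude that $Y=\bigcup_{i\in I}Y_i$ is closed, finishing via Corollary \ref{pvmdcharcor}. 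Your ultrafilter computation in steps (2)--(3) is precisely a hand-rolled proof of that lemma in this setting: the primeness of $\ms U$ under finite unions forces concentration on a single $Y_i$ (and, as a small economy, you only need the hypothesis at the one prime $\f P=Y_{\ms U}$, not at every prime). The obstacle you anticipate at step (4) actually dissolves: since $Y_i$ is the continuous image of the compact set $\tspec(D_i)$ in the Hausdorff constructible space $\spec(D)$, it is already closed, so from $Y_i\in \ms U$ you get directly $\f P=Y_{\ms U}\in \ad^{\rm c}(Y_i)=Y_i$, i.e., $\f P=\f q\cap D$ for some $\f q\in \tspec(D_i)$; and no localization/contraction bookkeeping is required, because the essentiality hypothesis says $(D_i)_{\f q}$ is a localization of $D$ at some prime, which the standard center computation $\f q(D_i)_{\f q}\cap D=\f q\cap D$ identifies as $\f P$, whence $D_{\f P}=(D_i)_{\f q}$ is a valuation domain. (Your alternative --- pushing $\ms U$ to $\tspec(D_i)$ along a section of $\f q\mapsto \f q\cap D$ and using that $\tspec(D_i)$ is closed under ultrafilter limits --- also works, but the closed-image argument is shorter.) In short, the paper's packaging buys economy: Engelking's lemma absorbs your concentration step, and the closed-map fact absorbs your step (4).
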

\begin{proof}
For any $i\in I$, let $\iota_i:D\longrightarrow D_i$ denote the inclusion. Of course, the set of the centers of the canonical and essential representation  
$$
\{(D_i)_{\f q}:\f q\in \tspec(D_i), i\in I\}
$$
of $D$ is $X:=\{\f q\cap D:\f q\in \tspec(D_i),i\in I\}=\bigcup_{i\in I}\iota_i^\star(\tspec(D_i))$. Let $\f p\in \spec(A)$. By assumption, the open neighborhood $D(f)\cap V(\f a)$ (with respect to the constructible topology) intersects $\iota_i^\star(\tspec(D_i))$ only for finitely many $i\in I$. Moreover, for any $i\in I$ the set $\iota_i^\star(\tspec(D_i))$ is closed, with respect to the constructible topology, being $\tspec(D_i)$ closed and $\iota_i^\star$ continuous. Thus $\{\iota_i^\star(\tspec(D_i)):i\in I\}$ is a locally finite family of closed sets of $\spec(D)$. By \cite[Theorem 1.1.11]{Engelking}, we infer that $X$ is closed, with respect to the constructible topology. Thus the conclusion follows immediately from Corollary \ref{pvmdcharcor}.
\end{proof}
The following results are immediate consequences of Theorem \ref{intersezioni}.

\begin{cor}\label{finite intersection}
Let $D_1,\z,D_n$ be P$v$MDs, set $D:=D_1\cap\z\cap D_n$ and assume that $D$ is essential, with respect to $\{D_1,\z,D_n\}$. Then $D$ is a P$v$MD. 
\end{cor}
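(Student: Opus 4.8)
The plan is to obtain this as an immediate specialization of Theorem \ref{intersezioni} to a finite index set $I = \{1, \z, n\}$. The two standing hypotheses of that theorem — that $D = \bigcap_{i=1}^n D_i$ is an intersection of P$v$MDs, and that $D$ is essential with respect to the family $\{D_1, \z, D_n\}$ — are exactly the assumptions of the corollary, so nothing needs to be verified there. The only substantive point is to check the remaining, local-finiteness-type hypothesis of Theorem \ref{intersezioni}, and the claim is that the finiteness of $I$ renders it trivially true.

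Concretely, I would fix an arbitrary $\f p \in \spec(D)$ and observe that, since $I = \{1, \z, n\}$ is finite, the set of indices $i \in I$ for which a $t$-prime ideal $\f q$ of $D_i$ with $f \notin \f q$ and $\f a \subseteq \f q \cap D$ might exist is, for any choice of $f$ and $\f a$, a subset of a finite set and hence finite. Thus any admissible choice works — for instance $f := 1 \in D - \f p$ (legitimate because $\f p$ is proper) together with the finitely generated ideal $\f a := (0) \subseteq \f p$ — and so the hypothesis of Theorem \ref{intersezioni} is satisfied. Invoking that theorem then yields that $D$ is a P$v$MD.

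There is no genuine obstacle here, since the mathematical content resides entirely in Theorem \ref{intersezioni}. In the finite setting the local-finiteness condition — whose role in the general proof is to ensure, via \cite[Theorem 1.1.11]{Engelking}, that the union $\bigcup_{i} \iota_i^\star(\tspec(D_i))$ of constructibly closed sets stays constructibly closed — collapses to the elementary fact that a finite union of closed sets is closed. One could even sidestep Theorem \ref{intersezioni} altogether: each $\iota_i^\star(\tspec(D_i))$ is constructibly closed, because $\tspec(D_i)$ is closed (Remark \ref{basic-facts-toperation}(7)) and $\iota_i^\star$ is continuous for the constructible topology, so the finite union $\bigcup_{i=1}^n \iota_i^\star(\tspec(D_i))$ is closed, whence Corollary \ref{pvmdcharcor} applies directly. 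Quoting Theorem \ref{intersezioni} is, however, the most economical route.
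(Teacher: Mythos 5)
Your proposal is correct and matches the paper exactly: the paper derives Corollary \ref{finite intersection} as an immediate consequence of Theorem \ref{intersezioni}, whose local-finiteness hypothesis is vacuous when $I$ is finite, just as you observe (and your witnesses $f:=1\in D-\f p$ and $\f a:=(0)\subseteq\f p$ are legitimate since $\f p$ is proper and $(0)$ is finitely generated). Your aside that one could instead apply Corollary \ref{pvmdcharcor} directly to the finite union $\bigcup_{i=1}^n \iota_i^\star(\tspec(D_i))$ of constructibly closed sets is also sound, but it is just the proof of Theorem \ref{intersezioni} specialized to the finite case, not a genuinely different route.
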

\begin{cor}
Let $\{D_i:i\in I\}$ be a nonempty family of P$v$MDs, set $D:=\bigcap\{D_i:i\in I\}$ and suppose that $D$ is essential, with respect to $\{D_i:i\in I\}$. Assume that at least one of the following properties is satisfied.
\begin{enumerate}[\hspace{0.6cm}\rm (1)]
\item For any $\f p\in \spec(D)$ there is an element $f\in D-\f p$ such that for only   finitely many indices $i\in I$ there exists a t-prime ideal $\f q $ of $D_i$ such that $f\notin \f q\cap D$.
\item For any $\f p\in \spec(D)$ there is a finitely generated ideal $\f a$ of $A$ contained in $\f p$ such that for only   finitely many indices $i\in I$ there exists a t-prime ideal $\f q $ of $D_i$ such that $\f q\cap D\supseteq \f a$.
\end{enumerate}
Then $D$ is a P$v$MD. 
\end{cor}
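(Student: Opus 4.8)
The plan is to deduce this corollary directly from Theorem \ref{intersezioni}, by recognizing that each of conditions (1) and (2) is precisely the hypothesis of that theorem with one of its two ingredients trivialized. Recall that, for a collection of P$v$MDs whose intersection $D$ is essential with respect to the family, Theorem \ref{intersezioni} requires: for every $\f p\in\spec(D)$ there exist an element $f\in D-\f p$ \emph{and} a finitely generated ideal $\f a\subseteq\f p$ such that, for all but finitely many $i\in I$, no $t$-prime ideal $\f q$ of $D_i$ satisfies simultaneously $f\notin\f q$ and $\f a\subseteq\f q\cap D$. The two listed conditions each retain only one of the ingredients $f$, $\f a$, so the work will consist in supplying the missing one in a degenerate but admissible way.

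First I would treat condition (1). Fixing $\f p$ and the element $f\in D-\f p$ furnished by (1), I would take $\f a:=(0)$, which is a finitely generated ideal contained in $\f p$. Since $\f a\subseteq\f q\cap D$ then holds for every $\f q$, the double requirement \lq\lq$f\notin\f q$ and $\f a\subseteq\f q\cap D$\rq\rq\ collapses to \lq\lq$f\notin\f q$\rq\rq, which, as $f\in D$, is exactly \lq\lq$f\notin\f q\cap D$\rq\rq. Hence the finiteness clause imposed by (1) is verbatim the finiteness clause demanded by Theorem \ref{intersezioni}, and the theorem yields that $D$ is a P$v$MD.

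Symmetrically, for condition (2) I would trivialize the other ingredient. Fixing $\f p$ and the finitely generated ideal $\f a\subseteq\f p$ provided by (2), I would take $f:=1$; then $f\in D-\f p$ because $\f p$ is a proper ideal. As every $t$-prime ideal $\f q$ is proper, the condition $f\notin\f q$ is automatic, so the double requirement reduces to $\f a\subseteq\f q\cap D$, matching the finiteness clause of (2). Once more Theorem \ref{intersezioni} applies and gives the conclusion.

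I do not expect any genuine obstacle here: the only points to check are that $(0)$ counts as a finitely generated ideal and that $1$ counts as an element of $D-\f p$, both of which are immediate. All the substance has already been absorbed into Theorem \ref{intersezioni}, and this corollary simply records the two natural one-sided specializations of its hypothesis.
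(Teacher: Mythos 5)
Your proof is correct and matches the paper, which simply records this corollary as an immediate consequence of Theorem \ref{intersezioni}; your specializations $\f a:=(0)$ for condition (1) and $f:=1$ for condition (2) are exactly the intended trivializations, and both are legitimate (the zero ideal is finitely generated, so $V(\f a)=\spec(D)$ is still constructibly clopen, and $1\notin\f q$ since $t$-primes are proper). Nothing further is needed.
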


The following example gives a direct application of Corollary \ref{finite intersection}.

\begin{ex}

Let $(V,M_V)$ be a one-dimensional, discrete valuation overring  of $\SZ[X]$ such that $V \cap \SQ[X]$ is P$v$MD, not Pr\"ufer (see \cite[Proposition 4.1 and Theorem 5.8]{lt}). Suppose  that $M_V \cap \SZ[X] = (p,f(X))$, where $p \in \SZ$ is a prime integer and $f(X) \in \SQ[X]$ is a non linear, monic and irreducible polynomial over $\mathbb F_p$ (the field with $p$ elements).

Then $A := V \cap \Int(\SZ)$ is a P$v$MD, not Pr\"ufer.

That $A$ is not Pr\"ufer follows from the fact that its overring $V \cap \SQ[X]$ is not Pr\"ufer.

We recall that all the prime ideals of $\Int(\SZ)$ are either $(0)$, uppers to zero or maximals of the type 
$\f m_{p,\alpha} = \{f \in \Int(\SZ) : f(\alpha) \in \widehat{p\SZ_{(p)}}\}$, where $p \in \SZ$ is prime and $\alpha \in \widehat{\SZ_{(p)}}$ (the $p$-adic completion of $\SZ$). It is also well-known that $\f m_{p,\alpha} \cap \SZ[X] = (p, X - a)$, where $a \in \SZ$ is such that $\alpha -a \in \widehat{p\SZ_{(p)}}$ (\cite[Remark V.2.6 (iiib)]{cc}). This implies that $\f m_{p,\alpha} \cap A \nsubseteq M_V \cap A$ since  $\f m_{p,\alpha} \cap \SZ[X] \nsubseteq M_V \cap \SZ[X]$ and $\SZ[X] \subset A$.

The domain $\Int(\SZ)$ is Pr\"ufer, so all its localizations at prime ideals are valuation domains.

Let's see   $V \cap (\bigcap_{Q \in \spec(\Int(\SZ))}\Int(\SZ)_Q)$ is an essential representation of $A$.

If $Q \in \spec(\Int(\SZ))$ and $Q \cap \SZ = (0)$, then $\Int(\SZ)_Q = A_{Q \cap A} = \SQ[X]_{(f)}$, where $f$ is such that $Q = f\SQ[X] \cap \Int(\SZ)$. Thus $\Int(\SZ)_Q$ is a localization of $A$ that is a valuation domain.

If $Q \cap \SZ = (p)$, for some prime $p \in \SZ$, then $Q = \f m_{p,\alpha}$, $\exists \,\, \alpha \in \widehat{\SZ_{(p)}}$. In this case, $A_{(\f m_{p,\alpha} \cap A)} = V_{(A \backslash (\f m_{p,\alpha} \cap A))} \cap \Int(\SZ)_{(A \backslash (\f m_{p,\alpha} \cap A))}$. But, since we have observed that $\f m_{p,\alpha} \cap A \nsubseteq M_V$, it follows that $V_{(A \backslash (\f m_{p,\alpha} \cap A))} = \SQ(X)$ and so  $A_{(\f m_{p,\alpha} \cap A)} = \Int(\SZ)_{(A \backslash (\f m_{p,\alpha} \cap A))}$, that is a valuation domain since $\Int(\SZ)$ is Pr\"ufer and $A_{(\f m_{p,\alpha} \cap A)}$ is a local overring of $\Int(\SZ)$.

Now, we'll see that $V$ is a localization of $A$ at some prime ideal. Obviously $V \nsupseteq \Int(\SZ)$, otherwise $V \cap \SQ[X]$ would be   Pr\"ufer  as being an overring of $\Int(\SZ)$. We also have that $V$ is rational (i.e. its value group is contained in $\SQ$).
By \cite[Lemma 1.3]{ho72}, we easily have that   $A_{\f M_V} = V$, where $\f M_V$ is the center of $V$ in $A$.

%

By Corollary \ref{pvmdcharcor} we have to show that the set of the centers in $A$ of $\{\Int(\SZ)_Q; Q \in \spec(\Int(\SZ))\} \cup \{V\}$  is closed  with respect to the constructible topology and this is equivalent to ask that the set of the centers in $A$  of $\{\Int(\SZ)_Q; Q \in \spec(\Int(\SZ))\} $ is closed. Now, this set is exactly the image of $\Int(\SZ)$ under the map 
$$f^{\star}: \spec(\Int(\SZ)) \rightarrow \spec(A), \quad P \mapsto P \cap A$$ 

and so it is closed. 
\end{ex}
\section{An application to Integer-Valued Polynomials}\label{applications}

Given a domain $D$ with quotient field $K$, the Integer-valued polynomial ring on $D$ is the ring $\Int(D) := \{f \in K[X]: f(D) \subseteq D\}$.

In \cite{Ta} (for Krull-type domains) and \cite{Calota} (for general domains), the authors study conditions on $D$ to have that $\Int(D)$ is a P$v$MD.

Following the notation of \cite{Calota}, a { $t$-prime} ideal $P \in \spec(D)$ is called \textit{int-prime} if $\Int(D)_{(D \backslash P)} \neq D_P[X]$ (in the following, for simplicity of notation, we will put $\Int(D)_P := \Int(D)_{(D \backslash P)}$, for any prime ideal $P$ of $D$). 

For any domain $D$, it is well-known that 
$D = \bigcap_{P \in \tspec(D)}D_P.$ 

We define the following two subsets of $\tspec(D)$:
$$\Lambda_1 := \{P \in \tspec(D): \Int(D)_P = D_P[X]\}$$
and
$$\Lambda_0 := \{P \in \tspec(D): \Int(D)_P \neq D_P[X]\}.$$ 

From \cite[Proposition I.3.4]{cc} it follows that the ideals of $\Lambda_0$ are also maximal (since, by \cite[Corollary 1.3]{Calota},  $\mid D/P \mid < \infty$).

\medskip
We set
$D_1 := \bigcap_{P \in \Lambda_1}D_P$ and  $D_0 := \bigcap_{P \in
\Lambda_0}D_P$.
From \cite[Lemma 4.1]{Calota} it follows that
$$\Int(D) = D_1[X] \cap \Int(D_0).$$ 

\medskip

If $\Int(D)$ is a P$v$MD, then $\Int(D_0)$ is Pr\"ufer (\cite[Corollary 4.9]{Calota}), but this last condition is not sufficient to get that $\Int(D)$ is a P$v$MD, also assuming that $D$ is a P$v$MD (\cite[Example 5.1]{Calota}).
 
If $D$ is Krull-type, the conditon  $\Int(D_0)$ is Pr\"ufer is equivalent to ask that $\Int(D)$ is a P$v$MD. This result is implicitely shown in \cite{Ta}, but we give a more explicit proof of this fact in the next Theorem.
 
 \begin{thm}\label{krull-type}
Let $D$ be a Krull-type domain. Then $\Int(D)$ is a P$v$MD if and only if $\Int(D_0)$ is Pr\"ufer.
\end{thm}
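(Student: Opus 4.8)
The plan is to treat the two implications separately, the forward one being essentially free and the converse carrying all the weight. For the forward direction, if $\Int(D)$ is a P$v$MD then $\Int(D_0)$ is Pr\"ufer without any use of the Krull-type hypothesis: this is exactly \cite[Corollary 4.9]{Calota}, already recalled in the discussion preceding the statement. So nothing new is needed there, and the whole content of the theorem lies in the converse.

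For the converse, assume $D$ is Krull-type and $\Int(D_0)$ is Pr\"ufer. The naive hope would be to prove that $\Int(D)$ is again Krull-type and quote Theorem \ref{griffin}, but this cannot work in general: already for $D=\SZ$ the ring $\Int(\SZ)$ is Pr\"ufer yet fails the $t$-finite character, since the constant $p$ lies in every maximal ideal $\f m_{p,\alpha}$ and there are infinitely many such. Hence $\Int(D)$ need not inherit the finite character of $D$, and this is precisely why the finite-character-free machinery of Theorem \ref{pvmdchar}, in the form of Corollary \ref{finite intersection}, is the correct tool. I would therefore exploit the factorization $\Int(D)=D_1[X]\cap \Int(D_0)$ recalled from \cite[Lemma 4.1]{Calota} and realize $\Int(D)$ as a finite intersection of two P$v$MDs.

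The first step is to check that both factors are P$v$MDs. Since $D$ is Krull-type it is in particular a P$v$MD, and $\Lambda_1\subseteq \tspec(D)$, so by Corollary \ref{sottointersezioni} the ring $D_1=\bigcap_{P\in\Lambda_1}D_P$ is a P$v$MD (the degenerate case $\Lambda_1=\emptyset$, where $D_1=K$, being harmless). Passing to the polynomial ring preserves the P$v$MD property, so $D_1[X]$ is a P$v$MD, while the second factor $\Int(D_0)$ is Pr\"ufer by hypothesis, hence a P$v$MD. It then remains only to verify the essentiality hypothesis of Corollary \ref{finite intersection} and conclude.

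The heart of the argument, and the step I expect to be the main obstacle, is to show that $\Int(D)$ is essential with respect to the family $\{D_1[X],\Int(D_0)\}$, that is, that the canonical representation $\{(D_1[X])_{\f q}:\f q\in\tspec(D_1[X])\}\cup\{\Int(D_0)_{\f q}:\f q\in\tspec(\Int(D_0))\}$ is an essential representation of $\Int(D)$. That each listed ring is a valuation domain is immediate: for the first family because $D_1[X]$ is a P$v$MD, so $\mathcal E(D_1[X])=\tspec(D_1[X])$; for the second because $\Int(D_0)$ is Pr\"ufer, so $\tspec(\Int(D_0))=\spec(\Int(D_0))$ and every localization is a valuation domain. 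The intersection of the whole family returns $\Int(D)$ by the displayed factorization. What requires genuine work is that each such localization is in fact a localization of $\Int(D)$, i.e.\ $(D_1[X])_{\f q}=\Int(D)_{\f q\cap \Int(D)}$ and $\Int(D_0)_{\f q}=\Int(D)_{\f q\cap \Int(D)}$. Here I would bring in the Cahen--Chabert description of the spectrum and localizations of $\Int(D)$, governed by the splitting $\tspec(D)=\Lambda_1\cup\Lambda_0$ with $\Int(D)_P=D_P[X]$ for $P\in\Lambda_1$ and $\Int(D)_P=\Int(D_P)$ for $P\in\Lambda_0$ (recall $\Lambda_0$ consists of maximal ideals with finite residue field), together with the uppers to zero; the Krull-type hypothesis, which forces $\Lambda_0\subseteq\tmax(D)$ and supplies local finiteness, is what guarantees that at any prime of $\Int(D)$ one of the two factors localizes up to the whole field $K(X)$ while the other yields exactly the desired valuation overring, so that the two contracted families do not interfere and the equalities of localizations hold. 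Once this separation is in place, I would simply invoke Corollary \ref{finite intersection} applied to the P$v$MDs $D_1[X]$ and $\Int(D_0)$ to conclude that $\Int(D)=D_1[X]\cap \Int(D_0)$ is a P$v$MD, completing the converse and the proof.
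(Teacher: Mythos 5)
Your forward direction coincides with the paper's (both simply quote \cite[Corollary 4.9]{Calota}), but for the converse you take a genuinely different route. The paper's own proof of Theorem \ref{krull-type} is a three-line reduction: $\Int(D_0)$ Pr\"ufer forces $D_0$ to be almost Dedekind by \cite[Proposition VI.1.5]{cc}; each $P\in\Lambda_0$ then makes $D_P$ a local overring of $D_0$, hence a DVR, so every $P\in\Lambda_0$ has height one; and the conclusion follows from \cite[Theorem 3.2]{Ta}, which states that for a Krull-type domain $D$ the ring $\Int(D)$ is a P$v$MD if and only if every $P\in\Lambda_0$ is height-one. You instead push the decomposition $\Int(D)=D_1[X]\cap\Int(D_0)$ through Corollary \ref{finite intersection}, which is exactly the strategy the paper reserves for the later and more general Theorem \ref{pvmdint}, where Krull-type is weakened to the hypothesis $\Int(D)_P=\Int(D_P)$ for each $t$-maximal $P$ (valid for Krull-type domains by \cite[Proposition 2.3]{Ta}, as the paper notes). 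So your approach is sound and in fact proves more than the stated theorem; what the paper's proof buys is brevity, by outsourcing all the hard work to \cite[Theorem 3.2]{Ta}, while your route avoids that theorem entirely at the cost of the essentiality verification. Two caveats on your sketch: the essentiality of $\Int(D)$ with respect to $\{D_1[X],\Int(D_0)\}$ is precisely where the paper needs Lemma \ref{dvr overring}, Proposition \ref{almost Dedekind} and Lemma \ref{iota0} (exploiting that $\Lambda_0$ consists of maximal ideals with finite residue fields and that $D_0$ is almost Dedekind with finite residue fields), and your phrase about one factor localizing up to $K(X)$ while the other yields the desired valuation overring names the right mechanism but leaves these computations, the actual heart of the argument, unperformed. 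Also, the equality $\Int(D)_P=\Int(D_P)$ for $P\in\Lambda_0$ is not part of the general description of $\spec(\Int(D))$ (the defining condition of $\Lambda_0$ is only $\Int(D)_P\neq D_P[X]$); it is a consequence of the Krull-type hypothesis via \cite[Proposition 2.3]{Ta} and should be invoked as such. Your preliminary observation that $\Int(\SZ)$ is Pr\"ufer but lacks the $t$-finite character, so that the naive plan of showing $\Int(D)$ Krull-type and citing Theorem \ref{griffin} cannot work, is correct and well taken.
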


\begin{proof}
If $\Int(D)$ is a P$v$MD, then we have already observed above that $\Int(D_0)$ is Pr\"ufer.

Suppose that $\Int(D_0)$ is Pr\"ufer. Then $D_0$ is almost Dedekind by \cite[Proposition VI.1.5]{cc}. If $P \in \Lambda_0$, then (by construction) $D_P$ is a local overring of $D_0$, and thus it is a DVR (as being $D_0$ almost Dedekind). So $P$ is height-one. From \cite[Theorem 3.2]{Ta} we know that when $D$ is Krull-type,  $\Int(D)$ is a P$v$MD if and only if each   $P \in \Lambda_0$ has height one. It follows that $\Int(D)$ is a P$v$MD.
\end{proof}

Using Theorem \ref{pvmdchar}, we will show that in Theorem \ref{krull-type} the Krull-type condition can be replaced by the weaker condition $\Int(D)_P = \Int(D_P)$, for each $P \in \tspec(D)$ (this is always verified when $D$ is Krull-type by \cite[Proposition 2.3]{Ta}).

\medskip

We recall {  several} facts  that we will freely use in the following:
 
\begin{oss}\label{t-ideals}
Let $D$ be an integral domain. 
\begin{enumerate} 
\item If  $S$ is a multiplicative subset of $D$, then each contraction to $D$ of a $t$-ideal of $D_S$ is a $t$-ideal of $D$ (\cite[Lemma 3.17]{ka}).
\item Let $Y$ be a nonempty collection of prime ideals of $D$ and let $D':=\bigcap\{D_{\f p}:\f p\in Y\}$. By applying \cite[Proposition 1.3]{ga} it follows that if $\f a$ is a $t$-ideal of $D'$, then $\f a\cap D$ is a $t$-ideal of $D$. 
\item A prime ideal of $D$ which is minimal over a principal ideal is a $t$-ideal (\cite[Corollaire 3, p. 31]{jaffard}).
In particular, in polynomial rings, the uppers to zero primes are always $t$-ideals. 

\end{enumerate}
\end{oss}

 \begin{lem}\label{dvr overring}
Let $V \subseteq W$ be valuation domains { having the same quotient field}, and suppose that $W$ has  finite residue field. Then $V=W$.
 \end{lem}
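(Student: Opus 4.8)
The plan is to use that the inclusion $V\subseteq W$ of valuation domains with a common quotient field exhibits $W$ as an overring of $V$, and then to read off the conclusion from the residue field of $W$. First I would let $\mathfrak m_W$ be the maximal ideal of $W$ and set $\mathfrak p:=\mathfrak m_W\cap V\in\spec(V)$. By the classical description of the overrings of a valuation domain (each of them is a localization of $V$ at a prime ideal; see, e.g., \cite{gilmer}), one has $W=V_{\mathfrak p}$. If one prefers an ad hoc check: the inclusion $V_{\mathfrak p}\subseteq W$ holds because every $s\in V\setminus\mathfrak p$ is a unit of $W$, while the reverse inclusion follows from the valuation property of $V$, since any $w\in W\setminus V$ has $w^{-1}\in V$, and $w^{-1}$ is then a unit of $W$, hence lies in $V\setminus\mathfrak p$, so that $w\in V_{\mathfrak p}$.

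Next I would compute the residue field of $W$. Since $W=V_{\mathfrak p}$, its maximal ideal is $\mathfrak p V_{\mathfrak p}$, so its residue field is
$$V_{\mathfrak p}/\mathfrak p V_{\mathfrak p}=\operatorname{Frac}(V/\mathfrak p),$$
the quotient field of the domain $V/\mathfrak p$. The hypothesis that $W$ has finite residue field therefore says exactly that $\operatorname{Frac}(V/\mathfrak p)$ is a finite field.

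The last step is a collapse argument. As $V/\mathfrak p$ embeds into the finite field $\operatorname{Frac}(V/\mathfrak p)$, it is a finite integral domain, hence a field; thus $\mathfrak p$ is a maximal ideal of $V$. Since $V$ is local, $\mathfrak p$ coincides with the maximal ideal of $V$, and therefore $W=V_{\mathfrak p}=V$. I do not expect a genuine obstacle: the entire content lies in identifying the residue field of the localization as $\operatorname{Frac}(V/\mathfrak p)$ and in the elementary observation that a domain contained in a finite field is itself a field, which is precisely where the finiteness hypothesis is used.
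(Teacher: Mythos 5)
Your proof is correct and takes essentially the same route as the paper: both realize $W$ as the localization $V_{\mathfrak p}$ at the center $\mathfrak p=\mathfrak m_W\cap V$ and use finiteness of the residue field of $W$ to force $V/\mathfrak p$ to be a field, hence $\mathfrak p$ maximal and $V=V_{\mathfrak p}=W$. The only cosmetic differences are that you verify the overring-is-a-localization fact by hand and identify the residue field as $\operatorname{Frac}(V/\mathfrak p)$, where the paper simply uses the embedding $V/\mathfrak p\subseteq W/\mathfrak m_W$.
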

 
 \begin{proof}
 
 There exists a prime ideal $P$ of $V$ such that $V_P = W$. If $M_W$ is the maximal ideal of $W$, then $M_W \cap V = P$ and so $V/P \subseteq W/M_W$. But $W/M_W$ is finite, whence $V/P$ is a field. Then  $P$ is maximal in $V$ and $V = V_P = W$. \end{proof}

 \begin{prop}\label{almost Dedekind} With the above notation, let $D$ be a P$v$MD and   $D_0$ be a Pr\"ufer domain with finite residue fields. Suppose that $\Int(D)_{P} = \Int(D_{P})$, for
each $t$-maximal ideal $P$ of $D$. Let $i : D \hookrightarrow D_0$ be the inclusion map and $i^\star: \spec(D_0) \rightarrow \spec(D)$ be the induced contraction  sending $\f q \mapsto \f q \cap D$. Then $i^\star(\spec(D_0)) = \Lambda_0$. In particular, it follows that $\Lambda_0$ is closed with respect to the constructible topology.
 
\end{prop}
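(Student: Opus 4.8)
The plan is to establish the two inclusions defining the equality $i^\star(\spec(D_0)) = \Lambda_0$ and then to read off the closedness of $\Lambda_0$ from the topological behaviour of $i^\star$. For the inclusion $\Lambda_0 \subseteq i^\star(\spec(D_0))$ I would argue directly: fix $P \in \Lambda_0$; as recalled before the statement (via \cite[Corollary 1.3]{Calota}), $P$ is $t$-maximal with $|D/P|<\infty$, and since $D$ is a P$v$MD the ring $D_P$ is a valuation domain with finite residue field. Because $P \in \Lambda_0$, one has $D_0 = \bigcap_{Q\in\Lambda_0}D_Q \subseteq D_P$, so $\f q := PD_P\cap D_0$ is a prime of $D_0$ with $\f q\cap D = PD_P\cap D = P$, whence $P = i^\star(\f q)$.

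For the reverse inclusion, take $\f q\in\spec(D_0)$ and put $P:=\f q\cap D$. Since $D_0$ is Pr\"ufer, every ideal of $D_0$ is a $t$-ideal (Remark \ref{basic-facts-toperation}(2)), and as $D_0$ is an intersection of localizations of $D$, Remark \ref{t-ideals}(2) yields that $P$ is a $t$-ideal, hence $P\in\tspec(D)$. Assume first that $\f q=:\f n$ is maximal. Then $(D_0)_{\f n}$ is a valuation domain with finite residue field $D_0/\f n$; since $D\setminus P\subseteq D_0\setminus\f n$ we get $D_P\subseteq (D_0)_{\f n}$, and these are valuation domains with the same quotient field $K$, the larger having finite residue field, so Lemma \ref{dvr overring} gives $D_P=(D_0)_{\f n}$. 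In particular $D/P\hookrightarrow D_0/\f n$ is finite, so $P$ is maximal, hence $t$-maximal, and $D_P$ has finite residue field; therefore $\Int(D_P)\neq D_P[X]$, and by the standing hypothesis $\Int(D)_P=\Int(D_P)\neq D_P[X]$, i.e. $P\in\Lambda_0$.

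The delicate point, which I expect to be the main obstacle, is to rule out nonzero non-maximal primes of $D_0$, that is, to know that $D_0$ is one-dimensional. Indeed, a short valuation-theoretic computation shows that for any $\f q\in\spec(D_0)$ one has $(D_0)_{\f q}=D_{\f q\cap D}$, so a nonzero non-maximal $\f q$ would produce a center $\f q\cap D$ with infinite residue field, which typically lies in $\Lambda_1$ rather than in $\Lambda_0$ and would defeat the inclusion. This one-dimensionality does not follow from Lemma \ref{dvr overring} alone (the residue-field finiteness lives at the maximal ideals, while the lemma needs it at the larger localization); it must be extracted from the almost-Dedekind nature of $D_0$ that the Pr\"ufer-with-finite-residue-fields hypothesis provides in the present context (cf. \cite[Proposition VI.1.5]{cc}). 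Granting this, every nonzero prime of $D_0$ is maximal and the previous paragraph applies, giving the equality $i^\star(\spec(D_0))=\Lambda_0$ up to the generic point $(0)=i^\star((0))$, which is harmless since $D_{(0)}=K$ is trivially a valuation domain.

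Finally, for the closedness statement I would use that $\spec(D_0)$ is quasi-compact, hence compact for the constructible topology, and that $i^\star$ is continuous and closed for the constructible topologies by \cite[Chapter 3, Exercise 29]{AM}. Consequently $i^\star(\spec(D_0))$ is closed in $\spec(D)$ with respect to the constructible topology, and by the equality established above so is $\Lambda_0$. This is precisely the input needed to invoke Corollary \ref{pvmdcharcor} in the subsequent results.
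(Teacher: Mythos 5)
Your proposal tracks the paper's own proof almost step for step on the substantive inclusion $i^\star(\spec(D_0))\subseteq \Lambda_0$: contraction of $t$-ideals using the Pr\"ufer property of $D_0$ together with Remark \ref{t-ideals}(2), the containment $D_P\subseteq (D_0)_{\f q}$ of valuation overrings, Lemma \ref{dvr overring} to force $D_P=(D_0)_{\f q}$, and then the standing hypothesis $\Int(D)_P=\Int(D_P)$ plus \cite[Proposition I.3.16]{cc} to land in $\Lambda_0$. Making the converse inclusion explicit via $\f q:=PD_P\cap D_0$ is correct (the paper dismisses it as trivial), and your closedness argument (compactness of the constructible topology on $\spec(D_0)$ plus continuity of $i^\star$) is the same mechanism the paper invokes through \cite{AM}.

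The genuine problem is exactly the step you flag and then leave ``granted''. You read ``finite residue fields'' as finiteness at the maximal ideals of $D_0$ only, rightly observe that a nonzero non-maximal $\f q$ would wreck the statement, and propose to rule this out via \cite[Proposition VI.1.5]{cc}. But that result deduces that $D_0$ is almost Dedekind from the hypothesis that $\Int(D_0)$ is \emph{Pr\"ufer}, which is not among the hypotheses of Proposition \ref{almost Dedekind} --- it enters only later, in Lemma \ref{iota0} and Theorem \ref{pvmdint}. Nothing in ``Pr\"ufer with finite residue fields at maximal ideals'' yields one-dimensionality, and under that weak reading the statement is in fact false: let $D=V$ be a rank-two valuation domain with principal maximal ideal $\f m=(t)$ and finite residue field. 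Then $D$ is a P$v$MD, $(X^q-X)/t$ shows $\f m\in\Lambda_0$ while the height-one prime lies in $\Lambda_1$ (infinite residue field, \cite[Corollary 1.3]{Calota}), so $D_0=V$ is Pr\"ufer with finite residue field and $\Int(D)_P=\Int(D_P)$ holds trivially, yet $i^\star(\spec(D_0))=\spec(V)\neq\{\f m\}=\Lambda_0$. The paper's proof sidesteps this by silently reading the hypothesis as finiteness of the residue field of $(D_0)_{\f q}$ for \emph{every} prime $\f q$ (it writes ``by assumption, the residue field of $(D_0)_{\f q}$ is finite'' for arbitrary $\f q$), which automatically excludes nonzero non-maximal primes, since for those the residue field of $(D_0)_{\f q}$ is the infinite field of fractions of $D_0/\f q$; so the correct repair is interpretive (or a strengthening of the hypothesis to the almost Dedekind setting of the applications, where the localizations are DVRs and the principal-maximal-ideal requirement of \cite[Proposition I.3.16]{cc} is also automatically met), not the citation you offer. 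Finally, note that both you and the paper let the generic point slip through: $i^\star((0))=(0)\in\Lambda_1$, so the displayed equality --- and with it the passage from ``the image is closed'' to ``$\Lambda_0$ is closed'' --- is literally correct only after restricting to nonzero primes (e.g.\ replacing $\spec(D_0)$ by $\Max(D_0)$); your dismissal of $(0)$ on the grounds that $D_{(0)}=K$ is a valuation domain addresses essentiality, not the set-theoretic equality that the closedness claim needs.
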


\begin{proof}
Let $\f q \in \spec(D_0)$ and set $P := \f q \cap D$. 
Since $D_0$ is a Pr\"ufer domain, $P$ is a $t$-prime ideal of $D$, by Remark \ref{t-ideals} (ii,iv). Keeping in mind that $D$ is a P$v$MD, $D_{P}$ is a valuation domain such that $D_{P}\subseteq (D_0)_{\f q}$. Furthermore, by Lemma \ref{dvr overring}, we have $D_{P}=(D_0)_{\f q}$, since, by assumption, the residue field of $(D_0)_{\f q}$ is finite. 
Thus we have $\Int(D)_{P} = \Int(D_{P})$ and $\Int(D_{P}) \neq D_{P}[X]$ (\cite[Proposition I.3.16]{cc}). So $P \in \Lambda_0$. This proves thaq $i^\star(\spec(D_0))\subseteq \Lambda_0$. The converse inclusion is trivial. The fact that $\Lambda_0$ is closed with respect to the constructible topology is now clear, in view of \cite[Chapter 3, Exercise 27]{AM}.
\end{proof}

\begin{oss}
{ The last statement of} Proposition \ref{almost Dedekind} stricty generalizes \cite[Lemma 2.6]{Calota}, in which the same result is shown for domains $D$ such that $\Int(D)$ is a P$v$MD.
\end{oss}
 
 \begin{lem}\label{iota0} With the above notation, suppose that $D$ is a P$v$MD such that $\Int(D)_P = \Int(D_P)$, for
each $t$-maximal ideal $P$ of $D$, and that  $D_0$ is almost Dedekind with all finite residue fields. 
Let $i_0: \Int(D) \rightarrow \Int(D_0)$ be the inclusion map and $i_0^\star: \spec(\Int(D_0)) \rightarrow \spec(\Int(D))$ be the induced contraction map sending $Q \mapsto Q \cap \Int(D)$.
Then $$\{M \cap \Int(D) :    M \in \spec(\Int(D_P)), \, P \in \Lambda_0\} = i_0^\star(\spec(\Int(D_0)).$$
 
 \end{lem}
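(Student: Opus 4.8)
The plan is to prove the two inclusions separately, using as the main engine the good behaviour of the $\Int$-construction under localization at primes with finite residue field. First I would record the tools coming from Proposition \ref{almost Dedekind} and its proof: for every $P\in\Lambda_0$ there is a prime $\f q\in\spec(D_0)$ with $\f q\cap D=P$ and $D_P=(D_0)_{\f q}$, and such a $\f q$ is necessarily maximal, since $P\neq(0)$ forces $D_P$ to be a nontrivial DVR while $D_0$ is almost Dedekind; conversely, every maximal ideal $\f q$ of $D_0$ contracts to a member $\f q\cap D\in\Lambda_0$ with $D_{\f q\cap D}=(D_0)_{\f q}$. Since $D_0$ has finite residue fields, the localization identity $\Int(D_0)_{\f q}=\Int((D_0)_{\f q})=\Int(D_P)$ holds (\cite{cc}), so that each $\Int(D_P)$, for $P\in\Lambda_0$, is literally a localization of $\Int(D_0)$. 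Finally, recall that $\Int(D)\subseteq\Int(D_0)$ (this is contained in the decomposition $\Int(D)=D_1[X]\cap\Int(D_0)$), so that $i_0^\star$ is defined, and that whenever $\Int(D)\subseteq\Int(D_0)\subseteq\Int(D_P)$, contracting a prime of $\Int(D_P)$ first to $\Int(D_0)$ and then to $\Int(D)$ gives the same ideal as contracting it directly to $\Int(D)$.

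For the inclusion $i_0^\star(\spec(\Int(D_0)))\subseteq\{M\cap\Int(D):M\in\spec(\Int(D_P)),\,P\in\Lambda_0\}$, I would start from $Q\in\spec(\Int(D_0))$ and set $\f q_0:=Q\cap D_0$. Choosing any maximal ideal $\f q$ of $D_0$ containing $\f q_0$ and putting $P:=\f q\cap D\in\Lambda_0$, one checks that $Q\cap(D_0\setminus\f q)=\emptyset$ (any element of the intersection would lie in $Q\cap D_0=\f q_0\subseteq\f q$), so $M:=Q\,\Int(D_0)_{\f q}$ is a prime of $\Int(D_0)_{\f q}=\Int(D_P)$ lying over $Q$, i.e.\ $M\cap\Int(D_0)=Q$. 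Then $M\cap\Int(D)=Q\cap\Int(D)=i_0^\star(Q)$, which exhibits $i_0^\star(Q)$ as an element of the left-hand set.

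For the reverse inclusion I would take $P\in\Lambda_0$ and $M\in\spec(\Int(D_P))$. Writing $\Int(D_P)=\Int(D_0)_{\f q}$ as in the first paragraph, the contraction $M':=M\cap\Int(D_0)$ is a prime of $\Int(D_0)$, and again $M\cap\Int(D)=M'\cap\Int(D)=i_0^\star(M')\in i_0^\star(\spec(\Int(D_0)))$. Combining the two inclusions yields the asserted equality. The only delicate point—everything else being a routine transfer of primes along localization and contraction—is the localization identity $\Int(D_0)_{\f q}=\Int((D_0)_{\f q})$; this is exactly where the hypothesis that $D_0$ have finite residue fields is indispensable, and it is the step I would be most careful to cite precisely.
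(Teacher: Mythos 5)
Your two-inclusion skeleton matches the paper's proof, but the step you yourself single out as the crux is a genuine gap: the identity $\Int(D_0)_{\f q}=\Int((D_0)_{\f q})$ is \emph{not} a consequence of $D_0$ being almost Dedekind with all residue fields finite, and there is no such statement in \cite{cc} to cite. Good behaviour of $\Int$ under localization is precisely the delicate condition studied in \cite{chabert} and \cite{loper}: Chabert proved that an almost Dedekind domain $E$ with finite residue fields has $\Int(E)$ Pr\"ufer if and only if $\Int(E)_{\f m}=\Int(E_{\f m})$ for every maximal ideal $\f m$, and Loper showed that not every such $E$ has $\Int(E)$ Pr\"ufer (in extreme examples one even has $\Int(E)=E[X]$), so the identity genuinely fails in general. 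Only one containment comes cheaply: since $(D_0)_{\f q}$ is a DVR, $D_0$ is $\f q$-adically dense in $(D_0)_{\f q}$ and a standard argument with the bounded denominators of a fixed polynomial gives $\Int(D_0)\subseteq\Int((D_0)_{\f q})$, hence $\Int(D_0)_{D_0\setminus\f q}\subseteq\Int((D_0)_{\f q})$; the reverse containment is exactly what can fail. Nor can you fall back on Pr\"uferness of $\Int(D_0)$: in Theorem \ref{pvmdint} that is an \emph{extra} hypothesis, not implied by the hypotheses of this lemma.

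The telling symptom is that your argument never uses the hypothesis $\Int(D)_P=\Int(D_P)$ for $t$-maximal $P$ of $D$ --- and that hypothesis is exactly the paper's substitute for your missing citation. The paper localizes at the multiplicative set $D\setminus P$ (rather than $D_0\setminus\f q$), checks that $\Int(D_0)_{D\setminus P}=\Int(D)_P$, and then concludes via the assumed equality $\Int(D)_P=\Int(D_P)$. Your proof can be repaired along the same lines: with $P=\f q\cap D\in\Lambda_0$ and $D_P=(D_0)_{\f q}$ from Proposition \ref{almost Dedekind}, the chain $\Int(D_P)=\Int(D)_P\subseteq\Int(D_0)_{D\setminus P}\subseteq\Int(D_0)_{D_0\setminus\f q}\subseteq\Int((D_0)_{\f q})=\Int(D_P)$ --- first equality the (so far unused) hypothesis, first inclusion from $\Int(D)=D_1[X]\cap\Int(D_0)\subseteq\Int(D_0)$, second from $D\setminus P\subseteq D_0\setminus\f q$, third from the density argument above --- forces equality throughout, after which your transfer of primes along localization and contraction (which is correct as written, and in fact handles the primes of $\Int(D_0)$ lying over $(0)$ in $D_0$ a bit more explicitly than the paper does) goes through verbatim and reproduces the paper's proof.
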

 
 \begin{proof} 
 
 We observe  that if $P \in \Lambda_0$, then $D_P = (D_0)_{\f q}$, for some $\f q \in \spec(D_0)$  
 (Proposition \ref{almost Dedekind}). In particular, $D_P = (D_0)_{D \backslash P}$. Thus $D_P$ is also a localization of $D_0$.
 Then $\Int(D_P)   \supseteq \Int(D_0)$, whence we have the inclusion 
 $\{M \cap \Int(D) :    M \in \spec(\Int(D_P)), \, P \in \Lambda_0\} \subseteq \iota_0^\star(\spec(\Int(D_0)). $
 
 Conversely, let $Q \in \spec(\Int(D_0))$. Then   $Q \cap D \in \Lambda_0$. In fact $P  = Q \cap D = (Q \cap D_0) \cap D$. By Proposition \ref{almost Dedekind}, $P \in \Lambda_0$. So, $Q = Q\Int(D_0)_{D \backslash P} \cap \Int(D_0)$. It is easy to check that $\Int(D_0)_{D \backslash P} = \Int(D)_P$. Since $\Int(D_P) = \Int(D)_P$, the thesis follows.
 \end{proof}

\begin{thm}\label{pvmdint}
With the above notation, let $D$ be an integral domain such that $\Int(D)_P = \Int(D_P)$, for
each $t$-maximal ideal $P$ of $D$. Then the following conditions are equivalent:
\begin{enumerate}
\item $\Int(D)$ is a P$v$MD;

\item if  $D$ is a P$v$MD and $\Int(D_0)$ 
is a Pr\"ufer domain.
\end{enumerate}  
 
\end{thm}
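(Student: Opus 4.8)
The plan is to prove the two implications separately; the implication $(1)\Rightarrow(2)$ is essentially bookkeeping, while $(2)\Rightarrow(1)$ carries all the content.

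For $(1)\Rightarrow(2)$, assume $\Int(D)$ is a P$v$MD. That $\Int(D_0)$ is Pr\"ufer is exactly \cite[Corollary 4.9]{Calota}, already recalled above, and that $D$ itself is a P$v$MD is contained in the general characterization \cite[Theorem 3.4]{Calota} that the present statement refines. So this direction requires only citation.

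For $(2)\Rightarrow(1)$, assume $D$ is a P$v$MD and $\Int(D_0)$ is Pr\"ufer, keeping the standing hypothesis $\Int(D)_P=\Int(D_P)$ for each $t$-maximal $P$. The strategy is to produce an essential representation of $\Int(D)$ whose set of centers is closed in the constructible topology, and then invoke Corollary \ref{pvmdcharcor}. First I record the structural facts: since $\Int(D_0)$ is Pr\"ufer, $D_0$ is almost Dedekind with finite residue fields (as in the proof of Theorem \ref{krull-type}, via \cite[Proposition VI.1.5]{cc} and \cite[Corollary 1.3]{Calota}), so that both Proposition \ref{almost Dedekind} and Lemma \ref{iota0} apply. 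Moreover $D_1=\bigcap_{P\in\Lambda_1}D_P$ is an intersection of localizations of the P$v$MD $D$ at $t$-primes, hence a P$v$MD by Corollary \ref{sottointersezioni}; therefore $D_1[X]$ is a P$v$MD as well, and $\Int(D)=D_1[X]\cap\Int(D_0)$.

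Let $j\colon\Int(D)\hookrightarrow D_1[X]$ and $i_0\colon\Int(D)\hookrightarrow\Int(D_0)$ be the inclusions, with induced contractions $j^\star,i_0^\star$, and set
$$
Z:=j^\star(\tspec(D_1[X]))\cup i_0^\star(\spec(\Int(D_0))).
$$
Both pieces are constructibly closed: $\tspec(D_1[X])$ is closed by Remark \ref{basic-facts-toperation}(7), $\spec(\Int(D_0))$ is constructibly compact hence closed in itself, and $j^\star,i_0^\star$ are continuous and closed for the constructible topology by \cite[Chapter 3, Exercise 29]{AM}; a finite union of closed sets is closed, so $Z$ is closed. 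I then claim that $\mathcal V:=\{\Int(D)_{\f Q}:\f Q\in Z\}$ is an essential representation of $\Int(D)$. It is a representation because $\Int(D)_{\f Q}\subseteq(D_1[X])_{\f Q'}$ whenever $\f Q=\f Q'\cap\Int(D)$, whence $\bigcap_{\f Q\in Z}\Int(D)_{\f Q}\subseteq D_1[X]\cap\Int(D_0)=\Int(D)$, the reverse inclusion being trivial.

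The heart of the argument, and the main obstacle, is to verify that $\Int(D)_{\f Q}$ is a valuation domain for every $\f Q\in Z$. If $\f Q=\f Q''\cap\Int(D)$ with $\f Q''\in\spec(\Int(D_0))$ and $P:=\f Q''\cap D$, then $P\in\Lambda_0$ by Proposition \ref{almost Dedekind}, and combining the identity $\Int(D_0)_{D\setminus P}=\Int(D)_P$ from the proof of Lemma \ref{iota0} with the standing hypothesis $\Int(D)_P=\Int(D_P)$ gives $\Int(D)_{\f Q}=\Int(D_0)_{\f Q''}$, a localization of the Pr\"ufer domain $\Int(D_0)$, hence a valuation domain. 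If instead $\f Q=\f Q'\cap\Int(D)$ with $\f Q'\in\tspec(D_1[X])$ and $P:=\f Q'\cap D$, I split into cases: when $P=(0)$, $\f Q$ is an upper to zero and $\Int(D)_{\f Q}$ is a localization of $K[X]$ (since $\Int(D)_{D\setminus\{0\}}=K[X]$); when $P\in\Lambda_1$ one has $(D_1)_{D\setminus P}=D_P$ and $\Int(D)_P=D_P[X]$, so $\Int(D)_{\f Q}=(D_P[X])_{\overline{\f Q'}}=(D_1[X])_{\f Q'}$ is a valuation domain because $D_P[X]$ is a P$v$MD. The delicate case is $P\in\Lambda_0$: here $(D_1[X])_{\f Q'}$ may genuinely fail to be a localization of $\Int(D)$, but this is harmless, because $\f Q\cap D=P\in\Lambda_0$ forces $\f Q$ to correspond to a prime of $\Int(D)_{D\setminus P}=\Int(D_P)=\Int(D_0)_{D\setminus P}$, so $\f Q\in i_0^\star(\spec(\Int(D_0)))$ and $\Int(D)_{\f Q}$ is already covered by the first case. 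The subtle point to get right is exactly this overlap: one must localize $\Int(D)$ itself (not $D_1[X]$) at each $\f Q\in Z$, and observe that the potentially spurious valuation overrings of $D_1[X]$ lying over $\Lambda_0$ are reabsorbed into the $\Lambda_0$-part of $Z$. Once this is settled, $\mathcal V$ is an essential representation of $\Int(D)$ with constructibly closed center set $Z$, and Corollary \ref{pvmdcharcor} yields that $\Int(D)$ is a P$v$MD.
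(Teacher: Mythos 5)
Your proof is correct, and while it shares the paper's skeleton --- the decomposition $\Int(D)=D_1[X]\cap \Int(D_0)$, the facts that $D_1[X]$ and $\Int(D_0)$ are P$v$MDs, Proposition \ref{almost Dedekind}, Lemma \ref{iota0}, and the same localization identities $\Int(D)_{\f p}=D_{\f p}[X]$ (for $\f p\in\Lambda_1$) and $\Int(D)_{\f p}=\Int(D_0)_{D\setminus \f p}$ (for $\f p\in\Lambda_0$) --- the endgame is genuinely different. The paper reduces to Corollary \ref{finite intersection}, so it must show that $\Int(D)$ is essential \emph{with respect to the pair} $\{D_1[X],\Int(D_0)\}$: every $(D_1[X])_{\f Q'}$ with $\f Q'\in\tspec(D_1[X])$ has to be a localization of $\Int(D)$ itself. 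This forces the paper to prove that the case $\f p:=\f Q'\cap D\in\Lambda_0$ cannot occur, via the contradiction $\Int(D)\subseteq D_1[X]\subseteq D_{\f p}[X]$ versus $\Int(D)\nsubseteq D_{\f p}[X]$; the middle inclusion $D_1\subseteq D_{\f p}$ is not justified in the text (it can be repaired: since $\f Q'$ survives localization at $D\setminus\f p$, no element of $\f p$ becomes a unit in $(D_1)_{D\setminus\f p}$, so $(D_1)_{D\setminus\f p}$ is a proper subring of $K$ containing the DVR $D_{\f p}=(D_0)_{D\setminus\f p}$, hence equals $D_{\f p}$). You instead inline the proof of Theorem \ref{intersezioni}/Corollary \ref{finite intersection}: you build the center set $Z$ explicitly, prove it constructibly closed, and apply Corollary \ref{pvmdcharcor}, which only requires that $\Int(D)_{\f Q}$ be a valuation domain for each $\f Q\in Z$ --- a strictly weaker demand than recovering $(D_1[X])_{\f Q'}$ as a localization of $\Int(D)$. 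This is exactly what lets you absorb the $\Lambda_0$-contractions into $i_0^\star(\spec(\Int(D_0)))$ rather than excluding them, and your reabsorption step is sound: $\f Q\cap D=P\in\Lambda_0$ makes $\f Q$ disjoint from $D\setminus P$, and under $\Int(D)_P=\Int(D_P)=\Int(D_0)_{D\setminus P}$ the prime $\f Q$ is indeed the contraction of a prime of $\Int(D_0)$, so the first case applies. In short: the paper's route is shorter given its general intersection theorem; yours is more self-contained and quietly sidesteps the one fragile step in the paper's argument.

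Two small patches you should make, neither structural. First, your trichotomy $P=(0)$, $P\in\Lambda_1$, $P\in\Lambda_0$ tacitly assumes that a nonzero contraction $P=\f Q'\cap D$ lies in $\tspec(D)=\Lambda_0\cup\Lambda_1$; this needs justification: since $D_1[X]=\bigcap_{\f p\in\Lambda_1}\Int(D)_{\f p}$, Remark \ref{t-ideals}(2) gives $\f Q=\f Q'\cap\Int(D)\in\tspec(\Int(D))$, and then $\f Q\cap D\in\tspec(D)$ by \cite[Proposition 2.1]{Ta}, exactly as the paper does. Second, in the $\Lambda_1$ case the conclusion that $(D_1[X])_{\f Q'}$ is a valuation domain should be attributed to $D_1[X]$ being a P$v$MD and $\f Q'$ a $t$-prime of it (a P$v$MD localized at any $t$-prime is a valuation domain, as a further localization of the valuation domain obtained at a $t$-maximal ideal above it), not to $D_P[X]$ being a P$v$MD, since you have not verified that the extension $\overline{\f Q'}$ is a $t$-prime of $D_P[X]$.
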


\begin{proof}

  (1)$\Rightarrow$ (2) It is already known.

(2)$\Rightarrow$ (1)
Since $D$ is a P$v$MD, also $D_1$ and $D_1[X]$ are P$v$MDs (\cite[Corollary 3.9 and Theorem 3.7]{ka}). Moreover $\Int(D_0)$ is Pr\"ufer, whence it is a P$v$MD. So $\Int(D)  = D_1[X] \cap \Int(D_0)$ is the intersection  of two P$v$MDs and, by Corollary \ref{finite intersection}, it is suffcient to show that $\Int(D)$ is essential with respect to both $D_1[X]$ and $\Int(D_0)$.

Take $Q \in \tspec(D_1[X])$, $Q \cap D \neq (0)$. By Remark \ref{t-ideals}(2), $Q \cap \Int(D) \in \tspec(\Int(D))$ and by \cite[Proposition 2.1]{Ta} $\f p := Q \cap D \in \tspec(D)= \Lambda_0 \cup \Lambda_1$.

If $\f p \in \Lambda_0$, then $\Int(D)  \nsubseteq D_{\f p}[X]$. But $\Int(D) \subseteq D_1[X] \subseteq D_{\f p}[X]$, which is a contraddiction. It follows that $\f p \in \Lambda_1$.  We observe that, for such a $\f p$, $(D_1)_{D \backslash \f p} = D_{\f p}$ since $D_1 \subseteq D_{\f p}$.

Now 
$$D_1[X]_Q = (D_1[X]_{D \backslash \f p})_{Q^e} = D_p[X]_{Q^e} = (\Int(D)_{\f p})_{Q^e}.$$

If  $Q$ is an upper to zero ideal (in this case   $QK[X] =  fK[X]$ for some irreducible $f \in K[X]$), then $D_1[X]_Q =  K[X]_f = \Int(D)_{Q \cap \Int(D)}$.

Then $D_1[X]$ is essential with respect to $\Int(D)$.

With regards to $\Int(D_0)$, take $M \in \spec(\Int(D_0))$. By Lemma \ref{iota0} $M \cap \Int(D) = M' \cap \Int(D)$, where $M' \in \spec(\Int(D_{\f p}))$, with $\f p \in \Lambda_0$. Then $\Int(D_0)_M = (\Int(D_0)_{\f p})_{\Int(D_0) \backslash M} = (\Int(D)_{\f p})_{\Int(D_0) \backslash M} = \Int(D)_{M \cap \Int(D)}$. Thus $\Int(D)$ is essential also with respect to $\Int(D_0)$ and the thesis follows.

\end{proof}

\bigskip


\begin{thebibliography}{12}
\bibitem{AM} M. F. Atiyah and I. G. Macdonald, Introduction to commutative algebra, Addison-Wesley, Reading, 1969.

\bibitem{cc}  P.-J. Cahen and J.-L. Chabert,
Integer-Valued Polynomials, Amer. Math. Soc. Surveys and
Monographs, {\bf 48}, Providence, 1997.

\bibitem{Calota} P. J. Cahen, A. Loper, F. Tartarone, Integer-valued polynomials and Pr\"ufer $v-$multiplication domains, {\it J. Algebra} {\bf 226} (2000), 765--787.

\bibitem{chabert} J.-L. Chabert,  Integer-valued polynomials, Pr\"ufer domains, and localization. Proc. Amer. Math. Soc. 118 (1993), no. 4, 1061--1073.

\bibitem{Engelking} R. Engelking, General Topology. Revised and completed edition. Heldermann Verlag, Berlin, 1989.

\bibitem{fi} C. A. Finocchiaro, Spectral spaces and ultrafilters, \emph{Comm. Algebra}, \textbf{42} (4) (2014), 1496-1508.

\bibitem{fifolo}  C. A. Finocchiaro, M. Fontana and K. A. Loper, The constructible topology on spaces of valuation domains, \emph{Trans. Amer. Math. Soc.}, \textbf{365} (2013), 6199-6216.

\bibitem{fo} M. Fontana, Topologically defined classes of commutative rings, \emph{Ann. Mat. Pura Appl.} {\bf 123} (1980), 331--355.

\bibitem{fo-ga} M. Fontana, S. Gabelli, On the class group and the local class group of a pullback, \emph{J. Algebra}, {\bf 181} (1996), 803--835.

\bibitem{fo-ho-lu} M. Fontana, E. Houston, T. Lucas, Toward a classification of prime ideals in Prüfer domains, \emph{Forum Math.} \textbf{22} (2010), no. 4, 741--766.

 
\bibitem{fl} M. Fontana, A. Loper, The patch topology and the ultrafilter topology on the prime spectrum of a commutative ring, {\it Comm. Algebra} {\bf 36} (2008), 2917--2922.

\bibitem{fl2} M. Fontana, A. Loper, Nagata rings, Kronecker function rings and related semistar operations,  {\it Comm. Algebra} {\bf 31}  (2003),  no. 10, 4775--4805.

\bibitem{ga} S. Gabelli, On the Nagata's Theorem for the class group. II, {\it Commutative algebra and algebraic geometry (Ferrara),   Lecture Notes in Pure and Appl. Math.} Dekker, New York, {\bf 206},  (1999) 117--142. 

\bibitem{gilmer}  R. Gilmer, {\em Multiplicative Ideal
Theory}, Marcel Dekker, New York, 1972.

\bibitem{Gr} M. Griffin, Some results on $v$-multiplication rings. \emph{Canad. J. Math.} {\bf 19} (1967)  710--722.

\bibitem{Gr2} M. Griffin, Rings of Krull type. \emph{J. Reine Angew. Math.} 229 (1968) 1--27.

\bibitem{ho} W. Heinzer, J. Ohm, 
An essential ring which is not a $v$-multiplication ring. 
\emph{Canad. J. Math.} \textbf{25} (1973), 856--861. 

\bibitem{ho72} W. Heinzer, J. Ohm, Noetherian intersections of integral domains. \emph{Trans. Amer. Math. Soc.} \textbf{167} (1972), 291--308.

 

\bibitem{jaffard}   P. Jaffard, Les Système d'Idéaux, 1960, Dunod, Paris
 

\bibitem{ka} B. Kang, Pr\"ufer $v$-multiplication domains and the ring $R[X]_{N_v}$, \emph{J. Algebra} {\bf 123} (1989), no.1, 151--170.

\bibitem{loper} A. Loper, A classification of all D such that Int(D) is a Prüfer domain, \emph{Proc. Amer. Math. Soc.} \textbf{126} (1998), no. 3, 657--660.

\bibitem{lt} A. Loper, F. Tartarone,  A classification of the integrally closed rings of polynomials
containing $\SZ[X]$, 
 \emph{J. Comm. Algebra} {\bf 1} no. 1 (2009) 51--91.  




\bibitem{olb} B. Olberding, Globalizing local properties of Prüfer domains, \emph{J. Algebra} \textbf{205} (1998), pp. 480--504.

\bibitem{olb3} B. Olberding,  Noetherian spaces of integrally closed rings with an application to intersections of valuation rings, \emph{Comm. Algebra}, \textbf{38} (2010), no. 9, 3318--3332.



\bibitem{mo-za} J. Mott, M. Zafrullah, On Pr\"ufer $v$-multiplication domains, \emph{Manuscripta Math.} {\bf 35} (1981), 1--26.

 



\bibitem{Ta}   F. Tartarone, Integer-valued polynomials over Krull-type domains and Prüfer $v$-multiplication domains, \textit{Proc. Amer. Math. Soc.} 128 (2000), no. 6, 1617--1625. 
\end{thebibliography}
\end{document}